\newtheorem{theorem}{Theorem}[section]
\newtheorem{lemma}[theorem]{Lemma}
\newtheorem{proposition}[theorem]{Proposition}
\newtheorem{corollary}[theorem]{Corollary}
\theoremstyle{definition}
\newtheorem{defn}[theorem]{Definition}
\theoremstyle{remark}
\newtheorem{remark}[theorem]{Remark}
\newtheorem{example}[theorem]{Example}
\numberwithin{equation}{section}
\crefname{example}{Example}{Examples}
\newcommand{\C}{\ensuremath{\mathcal{C}}}
\newcommand{\D}{\ensuremath{\mathcal{D}}}
\newcommand{\E}{\ensuremath{\mathcal{E}}}
\renewcommand{\H}{\ensuremath{\mathcal{H}}}
\newcommand{\N}{\ensuremath{\mathbb{N}}}
\newcommand{\R}{\ensuremath{\mathbb{R}}}
\newcommand{\1}{\ensuremath{\mathds{1}}}
\newcommand{\define}{\ensuremath{\triangleq}}
\renewcommand{\Re}{\mathrm{Re}}
\renewcommand{\Im}{\mathrm{Im}}
\renewcommand{\geq}{\geqslant}
\renewcommand{\leq}{\leqslant}
\DeclareMathOperator{\cov}{cov}
\DeclareMathOperator{\var}{var}
\DeclareMathOperator{\ent}{ent}
\DeclareMathOperator{\id}{id}
\newcommand{\vertiii}[1]{{\left\vert\kern-0.25ex\left\vert\kern-0.25ex\left\vert #1 \right\vert\kern-0.25ex\right\vert\kern-0.25ex\right\vert}}
\begin{document}
\title{Formulae for the Derivative of the Poincar\'e Constant of Gibbs Measures}
\author{Julian Sieber\footnote{Email: \href{mailto:j.sieber19@imperial.ac.uk}{j.sieber19@imperial.ac.uk}}}
\affil{Department of Mathematics, Imperial College London, Queen's Gate, London SW7 2AZ, United Kingdom}
\date{\today}

\maketitle
\begin{abstract}
	\noindent We establish formulae for the derivative of the Poincar\'e constant of Gibbs measures on both compact domains and all of $\R^d$. As an application, we show that if the (not necessarily convex) Hamiltonian is an increasing function, then the Poincar\'e constant is strictly decreasing in the inverse temperature, and vice versa. Applying this result to the $O(2)$ model allows us to give a sharpened upper bound on its Poincar\'e constant. We further show that this model exhibits a qualitatively different zero-temperature behavior of the Poincar\'e and Log-Sobolev constants.
	\vspace{0.2cm}\\
	\noindent\textbf{MSC2010:} Primary 60J60; secondary 82B21, 35P15, 47A55.\\
	\noindent\textbf{Key words and phrases:} Poincar\'e inequality, Gibbs measure, Log-Sobolev inequality, XY model. 
\end{abstract}

\section{Introduction and Main Results}
The study of functional inequalities such as the Poincar\'e and Logarithmic Sobolev inequality is of paramount interest in the stability theory of Markov processes. These functional inequalities can be used to determine the convergence rate of the law of an ergodic Markov process to the invariant measure. Beginning with Gross's seminal work linking Log-Sobolev inequalities to hypercontractivity of semigroups \cite{Gross1975}, functional inequalities have been successfully applied in a multitude of areas of mathematics; see the monographs \cite{Ane2000,Bakry1994, Bakry1983, Bakry2013, Guionnet2003, Ledoux2001, Martinelli1999, Royer2007, Saloff2002} for comprehensive overviews of the subject.

In this article we consider Gibbs probability measures $\mu_\beta$ at inverse temperature $\beta>0$ either on a bounded domain $U\subset\R^d$ or on all of $\R^d$. Recall that the Lebesgue density of $\mu_\beta$ is given by
\begin{equation}\label{eq:gibbs}
	d\mu_\beta(x)=\frac{1}{Z_\beta}e^{-\beta H(x)}\1_{U}(x)dx
\end{equation}
where the potential $H:U\to\R$ is such that $Z_\beta\define\int_U e^{-\beta H(x)}\,dx<\infty$.

We say that $\mu_\beta$ satisfies a \emph{Poincar\'e inequality} with constant $\alpha_\beta\geq 0$ if 
\begin{equation}\label{eq:poincare}
	\var_{\mu_\beta}(f)\define\int_U f^2\,d\mu_\beta-\left(\int_U f\,d\mu_\beta\right)^2\leq \alpha_\beta\int_U |\nabla f(x)|^2\,d\mu_\beta
\end{equation}
for all $f\in H^1(U,\mu_\beta)$. Here, the \emph{weighted Sobolev space} $H^1(U,\mu_\beta)$ is defined as the collection of weakly differentiable functions $f:U\to\R$ for which $f,\nabla f\in L^2(U,\mu_\beta)$. The norm
\begin{equation*}
	\|f\|_{H^1(U,\mu_\beta)}\define\left(\|f\|_{L^2(U,\mu_\beta)}^2+
	\|\nabla f\|_{L^2(U,\mu_\beta)}^2\right)^{1/2}
\end{equation*} 
renders it into a Hilbert space. The optimal (i.e. smallest) constant $\alpha_\beta$ in \eqref{eq:poincare} is called the \emph{Poincar\'e constant} of the measure $\mu_\beta$. Henceforth, $\alpha_\beta$ shall always denote this sharp constant. We also write $H^2(U)$ for the standard Sobolev space of twice weakly differentiable functions $f:U\to\R$ with $f,\nabla f,\nabla^2 f\in L^2(U)$. Finally, $W^{1,\infty}(U)$ shall denote the weakly differentiable functions $f:U\to\R$ with $f,\nabla f\in L^\infty(U)$. It is clear that $\C^1(\overline{U})\subset W^{1,\infty}(U)$ if $U$ is bounded.

The formulae of the derivative of the mapping $\beta\mapsto\alpha_\beta$ are summarized in the following \cref{thm:poincare_compact,thm:poincare_unbounded}. Together with \cref{thm:monotonicity} below, these are the main results of this article.
\begin{theorem}[Bounded domain]\label{thm:poincare_compact}
	Let $U\subset\R^d$ be a bounded domain. Assume further that $H\in W^{1,\infty}(U)$. If the smallest, non-zero eigenvalue of the infinitesimal generator $L_{\beta_0}$ is non-degenerate for some $\beta_0\geq 0$, then the Poincar\'e constant is analytic near $\beta_0$ and 
	\begin{equation}\label{eq:bounded_derivative}
		\partial_\beta\alpha_\beta\big|_{\beta=\beta_0}=-\alpha_{\beta_0}^2\int_U\varphi_{\beta_0}(x)\nabla{H}(x)\cdot\nabla\varphi_{\beta_0}(x)\,\mu_{\beta_0}(dx),
	\end{equation}
	where $\varphi_{\beta_0}$ is the normalized eigenfunction of the smallest, non-zero eigenvalue of the infinitesimal generator $L_{\beta_0}=\triangle-\beta_0 \nabla H\cdot\nabla$.
\end{theorem}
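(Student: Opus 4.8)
First recall that $\alpha_\beta=1/\lambda_\beta$, where $\lambda_\beta>0$ is the spectral gap of $-L_\beta$, i.e.\ the smallest non-zero eigenvalue of the non-negative self-adjoint operator $-L_\beta$ on $L^2(U,\mu_\beta)$ (the bottom eigenvalue $0$ being simple, since $U$ is connected). The plan is to differentiate $\beta\mapsto\lambda_\beta$ by perturbation theory; the only genuine obstruction is that the ambient Hilbert space $L^2(U,\mu_\beta)$ varies with $\beta$, so the first step is to remove this dependence via the ground-state transformation $f\mapsto g\define Z_\beta^{-1/2}e^{-\beta H/2}f$, which is a unitary map $L^2(U,\mu_\beta)\to L^2(U,dx)$. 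A direct computation turns the Dirichlet form $\int_U|\nabla f|^2\,d\mu_\beta$ into
\[
q_\beta(g)\;=\;\int_U|\nabla g|^2\,dx+\frac{\beta}{2}\int_U\nabla(g^2)\cdot\nabla H\,dx+\frac{\beta^2}{4}\int_U g^2|\nabla H|^2\,dx ,
\]
which makes sense precisely because $H\in W^{1,\infty}(U)$, so that $\nabla H\in L^\infty(U)$ and no second derivatives of $H$ appear. Since $e^{\pm\beta H}$ is bounded above and below on the bounded set $U$, the form domain of $q_\beta$ is $H^1(U)$ for every (complex) $\beta$, and the two lower-order terms are relatively form-bounded with respect to $\int_U|\nabla g|^2\,dx$ with relative bound $0$, uniformly for $\beta$ in a complex neighborhood of $\beta_0$. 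Hence $\{q_\beta\}$ is a holomorphic family of forms of type (a) in the sense of Kato, and the associated self-adjoint operators $\widetilde L_\beta$ (the conjugates of $L_\beta$) still have compact resolvent by the Rellich--Kondrachov embedding $H^1(U)\hookrightarrow\hookrightarrow L^2(U)$ (valid for, e.g., bounded Lipschitz $U$).

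With this reduction at hand I would invoke Kato's analytic perturbation theory for holomorphic families of type (a): since $\lambda_{\beta_0}$ is, by hypothesis, an isolated simple eigenvalue of $\widetilde L_{\beta_0}$, it persists on a neighborhood of $\beta_0$ as a simple eigenvalue $\lambda_\beta$ depending analytically on $\beta$, with analytic eigenprojection and, after normalization, analytic eigenfunction $g_\beta\in H^1(U)$. By continuity of the discrete spectrum, for $\beta$ near $\beta_0$ the bottom eigenvalue stays at $0$ (with eigenfunction $\propto e^{-\beta H/2}$) while $\lambda_\beta$ remains the spectral gap, so that the Poincar\'e constant $\alpha_\beta=1/\lambda_\beta$ is indeed analytic near $\beta_0$. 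Transforming back, $\varphi_\beta\define Z_\beta^{1/2}e^{\beta H/2}g_\beta$ is the normalized eigenfunction of $L_\beta$ in $L^2(U,\mu_\beta)$ for the eigenvalue $-\lambda_\beta$, and $Z_\beta$ is itself analytic and strictly positive.

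The derivative is then read off from the first-order perturbation formula for a simple eigenvalue of a type-(a) family: writing $q_\beta=q^{(0)}+(\beta-\beta_0)\,q^{(1)}+(\beta-\beta_0)^2\,q^{(2)}$, one has $\partial_\beta\lambda_\beta|_{\beta=\beta_0}=q^{(1)}(g_{\beta_0})$ with $q^{(1)}(g)=\tfrac12\int_U\nabla(g^2)\cdot\nabla H\,dx+\tfrac{\beta_0}{2}\int_U g^2|\nabla H|^2\,dx$. Undoing the ground-state transformation, by means of $g_{\beta_0}^2\,dx=\varphi_{\beta_0}^2\,d\mu_{\beta_0}$ and $\nabla\varphi_{\beta_0}=Z_{\beta_0}^{1/2}e^{\beta_0 H/2}\big(\nabla g_{\beta_0}+\tfrac{\beta_0}{2}g_{\beta_0}\nabla H\big)$, this becomes
\[
\partial_\beta\lambda_\beta\big|_{\beta=\beta_0}=\int_U\varphi_{\beta_0}\,\nabla H\cdot\nabla\varphi_{\beta_0}\,d\mu_{\beta_0},
\]
a finite quantity since $\varphi_{\beta_0}\in H^1(U)$ and $\nabla H\in L^\infty(U)$. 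Combining this with $\partial_\beta\alpha_\beta=\partial_\beta(1/\lambda_\beta)=-\alpha_{\beta_0}^2\,\partial_\beta\lambda_\beta$ yields \eqref{eq:bounded_derivative}. As an alternative to the form-based computation one may differentiate the eigenvalue equation $L_\beta\varphi_\beta=-\lambda_\beta\varphi_\beta$ directly in $\beta$, pair the result against $\varphi_\beta$ in $L^2(U,\mu_\beta)$, and use the self-adjointness of $L_\beta$ together with $\partial_\beta\|\varphi_\beta\|_{L^2(\mu_\beta)}^2=0$ to cancel every term containing $\partial_\beta\varphi_\beta$; this shortcut, however, requires slightly more regularity of the branch $\beta\mapsto\varphi_\beta$ than the first-order perturbation formula does.

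The main obstacle I anticipate lies not in the differentiation but in the setup: verifying carefully that $\{q_\beta\}$ is genuinely a holomorphic family of type (a) (closedness and sectoriality with $\beta$-independent form domain, holomorphy of $\beta\mapsto q_\beta(g)$) so that Kato's machinery --- analytic eigenvalue branch, analytic eigenprojection, first-order coefficient $q^{(1)}(g_{\beta_0})$ --- applies, while keeping the $W^{1,\infty}$ regularity of $H$ under control so that only $\nabla H$ (never $\nabla^2 H$) appears, and carrying the harmless normalizing factor $Z_\beta$ cleanly through the ground-state transformation.
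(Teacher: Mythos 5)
Your argument is correct, and it reaches the formula by a route that differs from the paper's in one structural respect: how the $\beta$-dependence of the Hilbert space is handled. The paper stays on the weighted spaces $L^2(U,\mu_\beta)$ and treats $L_\beta=\triangle+\beta V$ with $V=-\nabla H\cdot\nabla$ as a \emph{linear} perturbation; the varying inner product is absorbed into an abstract perturbation result (\cref{prop:derivative}) for holomorphic families that are self-adjoint with respect to a family of pairwise equivalent inner products, proved via Kato's similarity transformation of the spectral projections. The Feynman--Hellmann formula \eqref{eq:feynman_hellmann} then yields $\braket{\varphi_{\beta_0},V\varphi_{\beta_0}}_{\beta_0}$ directly, with no change of variables to undo. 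You instead fix the Hilbert space first by the ground-state transformation, at the price of making the perturbation \emph{quadratic} in $\beta$, and then invoke the off-the-shelf Kato type-(a) theory on $L^2(U,dx)$. A genuinely nice feature of your version is that you conjugate at the level of forms, writing the cross term as $\tfrac{\beta}{2}\int_U\nabla(g^2)\cdot\nabla H\,dx$ rather than integrating by parts to produce $\triangle H$; this keeps the argument within the stated hypothesis $H\in W^{1,\infty}(U)$, whereas the paper reserves the $h$-transform for the unbounded case precisely because there it needs two derivatives of $H$. Your back-transformation identity $q^{(1)}(g_{\beta_0})=\int_U\varphi_{\beta_0}\nabla H\cdot\nabla\varphi_{\beta_0}\,d\mu_{\beta_0}$ checks out. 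The only points to keep explicit are the same ones the paper glosses: identifying the form domain $\overline{\mathcal{A}}^{\|\cdot\|_{H^1}}$ with $H^1(U)$ requires mild boundary regularity of $U$ (cf.\ the proof of \cref{prop:neumann_advanced}), and one should note that the analytic branch through $\lambda_{\beta_0}$ remains the second eigenvalue for $\beta$ near $\beta_0$ (which you do, and which non-degeneracy guarantees; cf.\ \cref{rem:crossings}).
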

For simplicity, we only consider the whole space as unbounded domain:
\begin{theorem}[Unbounded domain]\label{thm:poincare_unbounded}
	Let $H:\R^d\to\R$ be twice weakly differentiable. Assume furthermore that 
	\begin{equation*}
		\sup_{x\in\R^d}\triangle H(x)<\infty\quad \text{and}\quad \lim_{|x|\to\infty}\big(|\nabla H(x)|^2-\triangle H(x)\big)=\infty.
	\end{equation*} 
	Then the Poincar\'e constant is analytic near any $\beta_0>0$ and the identity \eqref{eq:bounded_derivative} holds. Alternatively, we also have
	\begin{equation}\label{eq:unbounded_derivative}
		\partial_\beta\alpha_\beta\big|_{\beta=\beta_0}=-\frac{\alpha_{\beta_0}^2}{2}\int_{\R^d}\varphi_{\beta_0}^2(x)\big(\beta_0|\nabla{H}(x)|^2-\triangle H(x)\big)\,\mu_{\beta_0}(dx).
	\end{equation}
\end{theorem}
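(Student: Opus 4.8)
\emph{Proof proposal.} The plan is to move to the Schr\"odinger picture, invoke analytic perturbation theory in Kato's sense, and then translate the resulting first-order formula back to $L^2(\mu_{\beta_0})$; as in \cref{thm:poincare_compact}, I read the hypotheses as including non-degeneracy of the smallest non-zero eigenvalue of $L_{\beta_0}$, which the stated formulae presuppose. The first step is the ground-state transformation: the unitary $U_\beta\colon L^2(\R^d,\mu_\beta)\to L^2(\R^d,dx)$, $U_\beta f=Z_\beta^{-1/2}e^{-\beta H/2}f$, conjugates $-L_\beta$ into $-\triangle+V_\beta$ with $V_\beta=\tfrac{\beta^2}{4}|\nabla H|^2-\tfrac{\beta}{2}\triangle H$. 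Writing $V_\beta=\tfrac{\beta}{2}\big(\tfrac{\beta}{2}|\nabla H|^2-\triangle H\big)$ and using that $\triangle H$ is bounded above, the two hypotheses force $V_\beta(x)\to\infty$ as $|x|\to\infty$, locally uniformly in $\beta>0$, and $V_\beta$ is also bounded below. Hence $-\triangle+V_\beta$ is self-adjoint, bounded below, with compact resolvent, so its spectrum is discrete, $0=\lambda_0(\beta)<\lambda_1(\beta)\leq\lambda_2(\beta)\leq\cdots$, the ground state is $U_\beta\1\propto e^{-\beta H/2}$, and $\lambda_1(\beta)=1/\alpha_\beta$ is the spectral gap.

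Next I would set up Kato's type-(B) perturbation theory for the sesquilinear forms $\mathfrak h_\beta(f,g)=\int\nabla f\cdot\overline{\nabla g}\,dx+\int V_\beta f\bar g\,dx$. The key observations are that outside a fixed compact set $V_{\beta_1}$ and $V_{\beta_2}$ are comparable for all $\beta_1,\beta_2>0$, so the form domain $Q(\mathfrak h_\beta)$ does not depend on $\beta$, and that for $\beta$ in a complex neighbourhood of $\beta_0$ the imaginary part of $V_\beta$ is form-bounded by its real part with small relative bound, so the $\mathfrak h_\beta$ are uniformly sectorial there. Since $\beta\mapsto\mathfrak h_\beta(f,f)$ is a polynomial for each fixed $f$ in the common domain, $\{-\triangle+V_\beta\}$ is a holomorphic family of type (B). As $\lambda_1(\beta_0)$ is simple and isolated, Kato's theory yields that $\beta\mapsto\lambda_1(\beta)$ and a suitably normalized eigenvector $\beta\mapsto\psi_\beta$ are analytic near $\beta_0$, together with the Feynman--Hellmann identity
\[
	\partial_\beta\lambda_1(\beta)\big|_{\beta=\beta_0}=\int_{\R^d}\big(\partial_\beta V_\beta\big)\big|_{\beta=\beta_0}\,|\psi_{\beta_0}|^2\,dx=\tfrac12\int_{\R^d}\big(\beta_0|\nabla H|^2-\triangle H\big)\,|\psi_{\beta_0}|^2\,dx,
\]
the $\partial_\beta\psi_\beta$-contributions cancelling by the eigenrelation and the normalization (evaluated at the self-adjoint point $\beta_0$); integrability of the integrand follows from the exact identity $\partial_\beta V_\beta=\tfrac{2}{\beta}V_\beta+\tfrac12\triangle H$ together with $\psi_{\beta_0}\in Q(\mathfrak h_{\beta_0})$.

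It remains to translate and to reconcile the two displayed formulae. Undoing the transformation, $|\psi_\beta|^2\,dx=\varphi_\beta^2\,d\mu_\beta$, where $\varphi_\beta=U_\beta^{-1}\psi_\beta$ is the $L^2(\mu_\beta)$-normalized eigenfunction of $L_\beta$ for $\lambda_1(\beta)$; hence $\alpha_\beta=1/\lambda_1(\beta)$ is analytic near $\beta_0$ and $\partial_\beta\alpha_\beta=-\alpha_\beta^2\,\partial_\beta\lambda_1(\beta)$, which at $\beta_0$ is exactly \eqref{eq:unbounded_derivative}. To pass to \eqref{eq:bounded_derivative} I would invoke the integration-by-parts identity
\[
	\tfrac12\int_{\R^d}\big(\beta_0|\nabla H|^2-\triangle H\big)\,g^2\,d\mu_{\beta_0}=\int_{\R^d}g\,\nabla H\cdot\nabla g\,d\mu_{\beta_0},
\]
obtained by writing the right-hand side as $\tfrac12 Z_{\beta_0}^{-1}\int\nabla H\cdot\nabla(g^2)\,e^{-\beta_0 H}\,dx$ and integrating by parts via $\nabla\cdot\big(\nabla H\,e^{-\beta_0 H}\big)=\big(\triangle H-\beta_0|\nabla H|^2\big)e^{-\beta_0 H}$, the boundary term at infinity vanishing thanks to Agmon-type decay of $\psi_{\beta_0}$ and $\nabla\psi_{\beta_0}$ (valid since $V_{\beta_0}$ is confining). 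Taking $g=\varphi_{\beta_0}$ yields \eqref{eq:bounded_derivative}.

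I expect the main obstacle to lie in the second step on all of $\R^d$: extracting rigorously from $\sup_x\triangle H(x)<\infty$ and $|\nabla H(x)|^2-\triangle H(x)\to\infty$ as $|x|\to\infty$ both the discreteness of the spectrum of $-\triangle+V_\beta$ and a $\beta$-independent form domain with uniform sectoriality on a complex neighbourhood of $\beta_0$, so that the type-(B) machinery --- and hence the clean Feynman--Hellmann cancellation --- genuinely applies on the whole space. Once that analytic framework is in place, the ground-state transformation and the single integration by parts are routine.
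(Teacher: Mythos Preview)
Your strategy---ground-state transform to a Schr\"odinger operator, verify a holomorphic family of type (B), apply Feynman--Hellmann, and translate back---is precisely the paper's, and your identification of the type-(B) verification as the crux is correct. The paper carries this out in \cref{lem:sector,lem:sectorial} by splitting $\triangle H=(\triangle H)_+-(\triangle H)_-$: the positive part is bounded (by hypothesis) and hence a harmless bounded form perturbation, while $-(\triangle H)_-$ and $|\nabla H|^2$ are both nonnegative and together confining; this yields sectoriality on the sector $\{|\arg\beta|<\pi/6\}$ with the $\beta$-independent form domain $\D(\E_\triangle)\cap\D(\E_{-V_1})\cap\D(\E_{-V_2^-})$, which is the concrete substitute for your ``$V_{\beta_1}$ and $V_{\beta_2}$ comparable outside a compact'' sketch. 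Two points where you actually go beyond the paper: you supply the integration by parts linking \eqref{eq:unbounded_derivative} to \eqref{eq:bounded_derivative}, which the paper leaves implicit; and your caveat about reading non-degeneracy of $\lambda_1^{\beta_0}$ into the hypotheses is more careful than the paper, which asserts (\cref{prop:neumann_advanced}) simplicity of $\lambda_1^\beta$ on $\R^d$ via \cite[Theorem~XIII.47]{Reed1978}---a result that only gives simplicity of the \emph{ground} state of the Schr\"odinger operator, whereas the isotropic Gaussian in $d\geq 2$ already has $\lambda_1^\beta$ of multiplicity $d$.
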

\begin{example}
	There are only a handful of Gibbs measures for which the Poincar\'e constant is explicitly known. The most prominent example is of course the Gaussian distribution $\mu_\beta(dx)\propto e^{-\frac{\beta x^2}{2}}$ on $\R$. It is very well known that this measure exhibits the sharp Poincar\'e constant $\alpha_\beta=\beta^{-1}$ and the inequality \eqref{eq:poincare} is saturated by constant multiples of the function $\varphi_\beta(x)=\sqrt{\beta}x$. The identities \eqref{eq:bounded_derivative} and \eqref{eq:unbounded_derivative} yield
	\begin{equation*}
		\partial_\beta\alpha_\beta=-\frac{1}{\beta}\int_\R x^2\,\mu_\beta(dx)=-\frac{1}{\beta^2},
	\end{equation*}
	as expected. By tensorization, the case of a $d$-dimensional standard Gaussian distribution can be verified similarly.
\end{example}

As an application of the formula \eqref{eq:bounded_derivative} we would like to investigate the monotonicity of the mapping $\beta\mapsto\alpha_\beta$. If the Hamiltonian $H$ is $\kappa$-semi-convex for some $\kappa>0$, that is, $x\mapsto H(x)-\frac\kappa2 \|x\|^2$ convex (or equivalently $\nabla^2 H\geq\kappa$ if $H\in\C^2$), then the Bakry-\'Emery criterion \cite{Bakry1983} tells us that
\begin{equation}\label{eq:log_conc}
	\alpha_\beta\leq \frac{1}{\beta\kappa}.
\end{equation}
This provides a strong indication that the Poincar\'e constant is strictly decreasing in this setting.

The main difficulty we face in establishing this is the fact that the relationship between the potential $H$ and the eigenfunction $\varphi_\beta$ is unclear in general. If we however work in one dimension and impose certain symmetry assumptions, we can deduce a connection between $H$ and the monotonicity of the Poincar\'e constant. This is our final main result. To formulate it, let us recall that we say $H:I\to\R$, $I\subset\R$ an interval, is \emph{piecewise $\C^1$} if it is continuous and there is a finite subset of kinks $\mathcal{K}\subset I$ such that $H\restriction_{I\setminus\mathcal{K}}$ is continuously differentiable. At the exceptional points $\mathcal{K}$, we require the left- and right-sided derivative to exist.

\begin{theorem}\label{thm:monotonicity}
	Assume that $U=(-a,a)$ for some $a>0$ and $H:U\to\R$ is non-constant, even, and piecewise $\C^1$ with kinks $\mathcal{K}$. Then the sharp Poincar\'e constant $\alpha_\beta$ is a strictly 
	\begin{equation*}
		\begin{cases}
			\text{decreasing,}& \text{if }H^\prime(x)\geq 0\text{ for all }x\in(0,a)\cap\mathcal{K}^c,\\
			\text{increasing,}& \text{if }H^\prime(x)\leq 0\text{ for all }x\in(0,a)\cap\mathcal{K}^c,
		\end{cases}
	\end{equation*}
	function of $\beta\geq 0$. 
\end{theorem}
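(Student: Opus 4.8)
The strategy is to reduce the monotonicity question to the sign of the integral appearing in formula \eqref{eq:bounded_derivative}, and then to exploit the one-dimensional structure together with the evenness of $H$ to pin down that sign. Write $\psi = \varphi_\beta$ for the normalized eigenfunction of the spectral gap at a fixed $\beta$. Then \eqref{eq:bounded_derivative} reads
\begin{equation*}
	\partial_\beta \alpha_\beta = -\alpha_\beta^2 \int_{-a}^a \psi(x)\, H'(x)\, \psi'(x)\, \mu_\beta(dx),
\end{equation*}
so it suffices to show that, under the stated hypothesis $H' \geq 0$ on $(0,a)\cap\mathcal{K}^c$, the integral $\int \psi\, H'\, \psi'\, d\mu_\beta$ is strictly positive (and strictly negative in the other case — which follows by replacing $H$ with $-H$, noting the Poincar\'e constant and eigenfunction are unchanged save for the role of $\beta$; more carefully, the case $H'\leq 0$ is literally the same computation with reversed inequality). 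First I would record that since $H\in W^{1,\infty}$ and $U$ is bounded, \cref{thm:poincare_compact} applies at every $\beta_0$ where the gap is simple, and I would argue simplicity of the spectral gap: in one dimension the generator $L_\beta$ is (unitarily equivalent to) a Sturm--Liouville operator on an interval, whose eigenvalues are all simple — this is the classical oscillation theory, and it removes the non-degeneracy hypothesis of \cref{thm:poincare_compact} for free. Hence $\beta\mapsto\alpha_\beta$ is analytic on all of $[0,\infty)$, and strict monotonicity will follow once the derivative is shown to have a constant (nonzero) sign.

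The core of the argument is a structural description of $\psi$. Since $\mu_\beta$ is even (because $H$ is even) and the spectral gap is simple, the eigenfunction $\psi$ has a definite parity; as it is $\mu_\beta$-orthogonal to the constants it cannot be even and nonzero, so $\psi$ is \emph{odd}. Next, oscillation theory says the second Dirichlet–Neumann-type eigenfunction of a Sturm--Liouville problem — i.e. the first nonconstant Neumann eigenfunction here — has exactly one zero in $(-a,a)$; by oddness that zero is at $0$, so $\psi < 0$ on $(-a,0)$ and $\psi > 0$ on $(0,a)$ (up to an overall sign we fix this way). Moreover the eigenfunction equation $\psi'' - \beta H'\psi' = -\lambda \psi$ with $\lambda = \alpha_\beta^{-1}>0$ can be written, using the integrating factor $e^{-\beta H}$, as $(e^{-\beta H}\psi')' = -\lambda e^{-\beta H}\psi$; integrating from $x$ to $a$ and using the Neumann condition $\psi'(a)=0$ gives
\begin{equation*}
	e^{-\beta H(x)}\psi'(x) = \lambda \int_x^a e^{-\beta H(t)}\psi(t)\,dt.
\end{equation*}
For $x\in(0,a)$ the integrand on the right is $\geq 0$ (it is $>0$ on $(\max(x,0),a)$... actually on $(x,a)$), so $\psi'(x) \geq 0$, i.e. $\psi$ is nondecreasing on $[0,a]$ — and by oddness on $[-a,a]$. (A symmetric computation integrating from $-a$ to $x$ confirms $\psi'\geq 0$ on $(-a,0)$ as well.)

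With these facts in hand the sign computation becomes transparent. By evenness of $H'$... no: $H$ even makes $H'$ \emph{odd}. So on $(0,a)$ we have $H'\geq 0$, $\psi\geq 0$, $\psi'\geq 0$, hence $\psi H'\psi'\geq 0$ there; and on $(-a,0)$ all three of $H', \psi, \psi'$ have the \emph{opposite} sign pattern only in $H'$ and $\psi$ (both become $\leq 0$) while $\psi'$ stays $\geq 0$, so the product $\psi H' \psi' \geq 0$ again (product of two nonpositives and a nonnegative). Thus the integrand $\psi\, H'\, \psi'$ is $\geq 0$ on all of $(-a,a)$, giving $\partial_\beta\alpha_\beta \leq 0$. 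For \emph{strict} monotonicity I would argue the integral cannot vanish: if it did, then $\psi H'\psi' = 0$ a.e.; on the set where $\psi' > 0$ (which has positive measure near $0$, since $\psi$ is strictly increasing through its zero — from the identity above $\psi'(x)>0$ for $x$ slightly less than $a$ unless $\psi\equiv 0$ on $(x,a)$, and one propagates this) we would need $H' = 0$ a.e. there, and pushing this across $(0,a)$ forces $H$ constant, contradicting the hypothesis. Making the "propagate" steps precise — showing $\psi' > 0$ on a set large enough to force $H'\equiv 0$ on all of $(0,a)$ — is the one place requiring care; I expect to handle it by a connectedness/unique-continuation argument on the Sturm--Liouville equation (if $\psi'$ vanishes on an open subinterval then $(e^{-\beta H}\psi')'=0$ there forces $\psi\equiv\text{const}$, but $\psi$ has a zero and is nonconstant, contradiction), so in fact $\psi' > 0$ on all of $(0,a)$ except possibly isolated points, which immediately upgrades the conclusion to strict. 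The case $H'\le 0$ is identical with all signs reversed, yielding $\partial_\beta\alpha_\beta \geq 0$ strictly. Finally, one should note the argument at $\beta = 0$: the one-sided derivative there is covered since \cref{thm:poincare_compact} allows $\beta_0 \geq 0$, and $\mu_0$ is just normalized Lebesgue measure on $(-a,a)$ whose gap is simple, so nothing special is needed.

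The main obstacle, then, is not the sign bookkeeping but the two softer ingredients: (i) invoking Sturm--Liouville oscillation theory to get simplicity of the gap and the one-zero / monotone-eigenfunction structure in the piecewise-$\C^1$ (merely $W^{1,\infty}$) regularity class, which may need a short approximation or a direct ODE argument at the kinks; and (ii) the strictness, i.e. ruling out that the derivative integral vanishes, which hinges on the eigenfunction's derivative being positive on a substantial set and is cleanest via unique continuation for the first-order ODE satisfied by $e^{-\beta H}\psi'$.
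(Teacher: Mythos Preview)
Your proposal is correct and follows essentially the same route as the paper: both arguments feed the derivative formula \eqref{eq:bounded_derivative}, establish that the gap eigenfunction is odd and strictly monotone via the integrated Sturm--Liouville identity $e^{-\beta H}\psi' = \lambda\int_x^a e^{-\beta H}\psi$, and then read off the sign of the integrand (the paper substitutes this identity into the derivative formula and reduces by symmetry to $(0,a)$, while you analyse the product $\psi H'\psi'$ directly---these are the same computation). The paper packages the eigenfunction properties into a standalone proposition (your points (i) and (ii) are exactly its \cref{prop:sl_properties}), and its strictness step is the same as yours: non-constancy of $H$ gives $H'>0$ on some subinterval of $(0,a)$, and strict monotonicity of $\psi$ forces the integrand to be strictly positive there.
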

The following corollary makes the decay hinted by \eqref{eq:log_conc} rigorous.
\begin{corollary}\label{cor:log_concave}
	Let $U=(-a,a)$ and $H\in\C^1(U)$ be even and $\kappa$-semi-convex for some $\kappa>0$. Then the Poincar\'e constant $\alpha_\beta$ is strictly decreasing in $\beta$.
\end{corollary}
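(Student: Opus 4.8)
The plan is to derive this as a direct consequence of \cref{thm:monotonicity}. The only gap is that \cref{thm:monotonicity} asks for $H'(x)\ge 0$ on $(0,a)$, whereas $\kappa$-semi-convexity of an even $\C^1$ function is an a priori weaker-looking hypothesis. So the heart of the argument is the elementary observation that an even, $\kappa$-semi-convex $\C^1$ function on $(-a,a)$ is automatically nondecreasing on $[0,a)$.

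First I would record that $H$ being $\kappa$-semi-convex with $\kappa>0$ means $g(x)\define H(x)-\tfrac{\kappa}{2}x^2$ is convex on $(-a,a)$; since $H$ is even and $x\mapsto x^2$ is even, $g$ is even as well. A convex function that is even on a symmetric interval attains its minimum at $0$ and is nondecreasing on $[0,a)$ — this follows because for $0\le x<y<a$ convexity together with $g(-y)=g(y)$ gives $g(x)\le \tfrac{y-x}{2y}g(-y)+\tfrac{y+x}{2y}g(y)=g(y)$. Hence $g'(x)\ge 0$ for $x\in(0,a)$ (using $g\in\C^1$), and therefore $H'(x)=g'(x)+\kappa x\ge \kappa x>0$ for all $x\in(0,a)$. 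In particular $H$ is non-constant and, being $\C^1$, is trivially piecewise $\C^1$ with empty kink set $\mathcal{K}=\varnothing$, so $\mathcal{K}^c=(-a,a)$ and the condition "$H'(x)\ge 0$ for all $x\in(0,a)\cap\mathcal{K}^c$" in \cref{thm:monotonicity} is satisfied.

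Then I would simply invoke \cref{thm:monotonicity}: all of its hypotheses (namely $U=(-a,a)$, $H$ non-constant, even, piecewise $\C^1$, and $H'\ge 0$ on the positive half) hold, so $\beta\mapsto\alpha_\beta$ is strictly decreasing on $[0,\infty)$, which is the claim. I do not expect any genuine obstacle here; the entire content is the one-line convexity argument showing $H'\ge 0$ on $(0,a)$, and one should be slightly careful only about the $\C^1$ regularity at $0$ (where evenness forces $H'(0)=0$, consistent with the bound $H'(x)\ge\kappa x$). No separate treatment of $\beta_0=0$ is needed since \cref{thm:monotonicity} already covers $\beta\ge 0$.
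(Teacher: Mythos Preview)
Your proposal is correct and follows essentially the same route as the paper: both reduce to \cref{thm:monotonicity} by showing $H'(x)\geq\kappa x>0$ on $(0,a)$, the paper via $H'(0)=0$ together with monotonicity of $H'-\kappa\,\id$, and you via the equivalent observation that the even convex function $g=H-\tfrac{\kappa}{2}x^2$ is nondecreasing on $[0,a)$. Your version is slightly more verbose but the content is the same.
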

\begin{proof}
	Since $H$ is even and continuously differentiable, we must have $H^\prime(0)=0$. By $\kappa$-semi-convexity, it follows that $H^\prime(x)\geq\kappa x$ for all $x\in(0,a)$.
\end{proof}
There are, however, a variety of Gibbs measures of practical importance that are not comprised by \cref{cor:log_concave}, but which do fall in the regime of \cref{thm:monotonicity} (see \cref{fig:theorem} for the illustration of such a potential). 

\begin{figure}[H]
	\centering
    \includegraphics[width=0.7\textwidth]{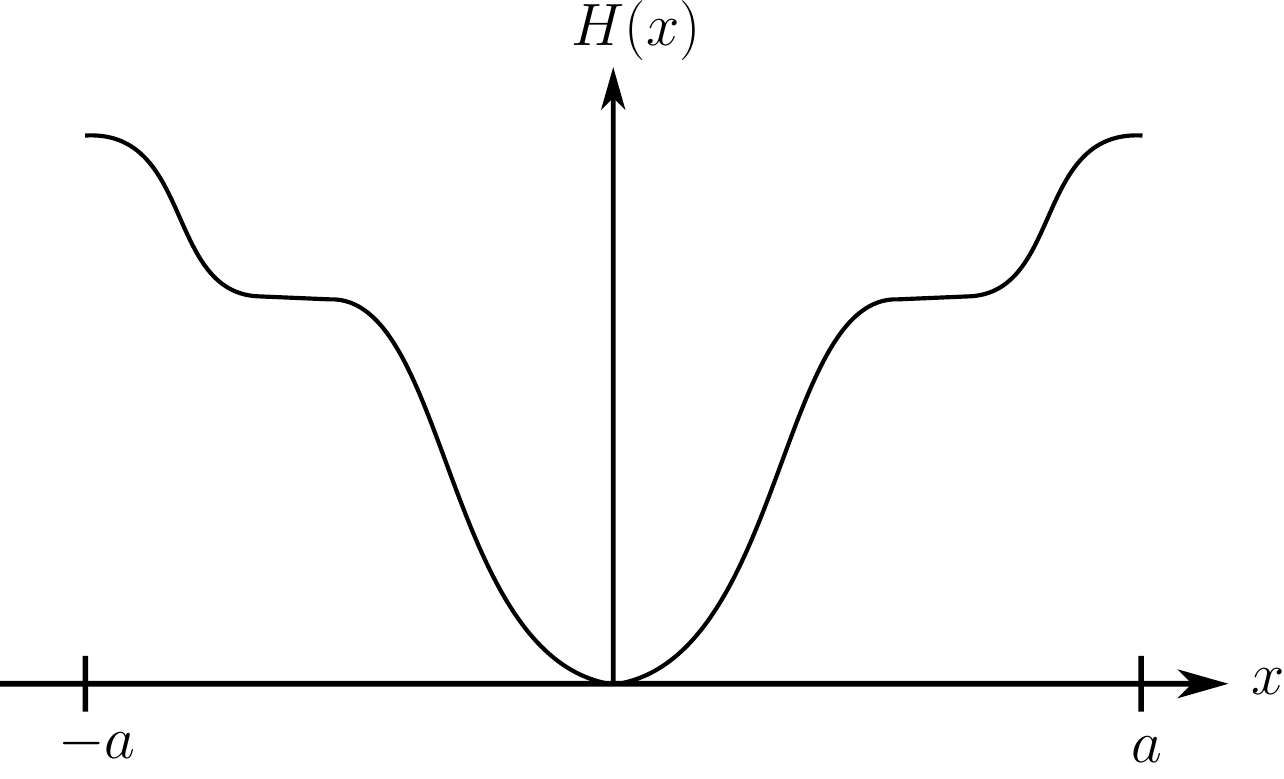}
	\caption{A prototypical function satisfying the assumptions of \cref{thm:monotonicity}. Notice that the Hamiltonian $H$ may have saddle points or even flat passages. Furthermore, $H$ need not be convex.}
	\label{fig:theorem}
\end{figure}

The article is structured as follows: In \cref{sec:derivative} we recall some results from the perturbation theory of self-adjoint operators. \Cref{sec:monotonicity} establishes symmetry properties of the eigenfunction $\varphi_\beta$ which are then used to deduce \cref{thm:monotonicity}. There, we also provide an example illustrating that the statement fails for the Log-Sobolev constant. One of the prime applications of \cref{thm:monotonicity} might be to prove bounds on the Poincar\'e constant which are uniform in $\beta$. We illustrate this in \cref{sec:o_n} on the example of the $O(2)$ model. We conclude this article in \cref{sec:saturation} with another noteworthy property of the $O(2)$-model: The Poincar\'e constant vanishes as $\beta\to\infty$, whereas the Log-Sobolev constant saturates at a strictly positive value.

\paragraph{Acknowledgments.} I would like to thank Sebastian Andres and Roland Bauerschmidt for their encouragement and many helpful discussions. I am very grateful to two anonymous referees for their careful reading of the paper. Their comments considerably improved the presentation. Financial support by the EPSRC under grant no. EP/S023925/1 is gratefully acknowledged.

\section{Derivative of the Poincar\'e Constant}\label{sec:derivative}

Without further notice, in the sequel we always require that the domain of any unbounded operator (or a quadratic form) is dense. Remember that the resolvent set $\rho(L)$ of an operator $\big(L,\D(L)\big)$ consists of the values $\rho\in\mathbb{C}$ such that $\rho-L$ is bounded invertible. The spectrum of $L$ is defined as the complement of the resolvent set: $\sigma(L)\define\mathbb{C}\setminus\rho(L)$. The operator $L$ has compact resolvent if for some (and hence all) $\rho\in\rho(L)$, $(\rho-L)^{-1}$ defines a compact operator. It is standard that, if in addition $\big(L,\D(L)\big)$ is self-adjoint, this is equivalent to a purely discrete spectrum which may only accumulate at $\pm\infty$. In fact, sufficiency is known to even hold for closed operators on Banach spaces and necessity is an easy consequence of the spectral theorem.

\subsection{The Poincar\'e Constant as Eigenvalue Problem}\label{sec:eigenvalue}

Let $U\subset\R^d$. Recall that the Poincar\'e constant is given by
\begin{equation*}
	\alpha_\beta=\sup_{\substack{f\in H^1(U,\mu_\beta)\\f\perp 1}}\frac{\E_\beta(f)}{\|f\|^2_{\beta}},
\end{equation*}
where $\E_\beta(f)\define\int_U |\nabla f|^2\,d\mu_\beta$, $f\perp 1$ indicates that $f$ is centered with respect to $\mu_\beta$, and $\|\cdot\|_{L^2(U,\mu_\beta)}$ denotes the norm on $L^2(U,\mu_\beta)$. We also write $\braket{\cdot,\cdot}_\beta$ for the associated inner product. Up to subtleties regarding the form domain of $\E_\beta$, which we shall address below, the Poincar\'e constant is therefore nothing but the minimizer of the Rayleigh quotient, characterizing the second smallest eigenvalue of a suitable self-adjoint operator. In fact, if we define
\begin{equation*}
	\mathcal{A}\define\begin{cases}
	\left\{f\in\C^\infty(\overline{U}):\,\frac{\partial f}{\partial n}=0\text{ on }\partial U\right\}, & U\subset\R^d\text{ bounded},\\
	\C_c^\infty(\R^d), & U=\R^d,
	\end{cases}
\end{equation*}
where $\frac{\partial f}{\partial n}$ denotes the normal derivative of $f$, then it is easy to check that 
\begin{equation*}
	L_\beta\define\triangle-\beta\nabla H\cdot\nabla,\qquad \D(L_\beta)=\mathcal{A},
\end{equation*}
is a non-positive, densely defined, symmetric operator with $\E_\beta(f)=-\braket{f,L_\beta f}_\beta$ its Dirichlet form. In a slight abuse of notation, we denote its self-adjoint Friedrichs extension of $L_\beta$ by the same symbol. Remember that the domain of the associated Dirichlet form is precisely given by $\D(\E_\beta)=\overline{\mathcal{A}}^{\|\cdot\|_{\E_\beta}}$ where the closure is taken with respect to the form norm $\|f\|_{\E_\beta}=\|f\|_{H^1(U,\mu_\beta)}$. 

In case $U=\R^d$, the operator $\big(L_\beta,\D(L_\beta)\big)$ is of course the infinitesimal generator of the overdamped Langevin (or stochastic gradient) diffusion
\begin{equation*}
	dX_t^\beta=-\beta\nabla H(X_t^\beta)\,dt+\sqrt{2}\,dW_t
\end{equation*}
with $(W_t)_{t\geq 0}$ a $d$-dimensional Wiener process. If $U\subset\R^d$ is bounded, the question of the associated Markov process is much more delicate because of boundary effects encoded in the construction of $\big(L_\beta,\D(L_\beta)\big)$. The natural candidate diffusion is given by
\begin{equation}\label{eq:normal_reflection}
	dX_t^\beta=-\beta\nabla H(X_t^\beta)\,dt+\sqrt{2}\,dW_t-\ell_t^\beta,
\end{equation}
where the bounded variation process $\ell^\beta$ is implicitly defined by
\begin{equation*}
	\ell_t^\beta=\int_0^t n(X_s^\beta)\,d|\ell^\beta|_s,\qquad |\ell^\beta|_t=\int_0^t \1_{\partial U}(X_s^\beta)\,d|\ell^\beta|_s
\end{equation*}
with the total variation process $|\ell^\beta|$. Well-posedness of \eqref{eq:normal_reflection} (in the weak sense) is related to the unique solvability of the celebrated Skohorod problem \cite{Skorokhod1961,Skorokhod1962}. Under additional regularity assumptions on both the potential $H$ and the domain $U$, this can indeed be established \cite{Stroock1971,Lions1984}. The existence of the associated Markov process is however not pertinent to questions investigated in this article.

It turns out the operator $L_\beta$ is unitarily equivalent to a Schr\"odinger operator $\tilde{L}_\beta$ on the unweighted space $L^2(U)$. To see this, we define the \emph{inverse ground state transformation} (or \emph{$h$-transform}) $U_\beta:L^2(U)\to L^2(U,\mu_\beta)$, $U_\beta f(x)=\sqrt{Z_\beta}e^{\beta H(x)/2}f(x)$. It is easy to check that $U_\beta$ is unitary and
\begin{equation}\label{eq:ground_state_trafo}
	\tilde{L}_\beta f(x)\define U^*_\beta L_\beta U_\beta f(x)=\triangle f(x)-\frac12\left(\frac{\beta^2}{2}|\nabla H(x)|^2-\beta\triangle H(x)\right)f(x).
\end{equation}

The following result summarizes the connection of the Poincar\'e constant with the spectrum of the operator $L_\beta$, taking into account the due domain considerations. Without further notice, we shall tacitly assume the conditions of \cref{thm:poincare_compact,thm:poincare_unbounded}, respectively, on the regularity of the domain $U$ and the potential $H$.
\begin{proposition}\label{prop:neumann_advanced}
	Let $\big(L_\beta,\D(L_\beta)\big)$ be the Friedrichs extension of the operator $L_\beta=\triangle-\beta\nabla H\cdot\nabla$ with domain $\mathcal{A}$ as above. This operator has a discrete spectrum for any $\beta\geq 0$ (for any $\beta>0$ if $U=\R^d$) and the Poincar\'e constant is given by $\alpha_\beta=-(\lambda_1^\beta)^{-1}$ where $\lambda_1^\beta=\sup\{\lambda\in\sigma(L_\beta):\,\lambda<0\}$. Moreover, the inequality \eqref{eq:poincare} is saturated if and only if $f\in H^1(U,\mu_\beta)$ is in the associated eigenspace. If $U=\R^d$, the eigenvalue $\lambda_1^\beta$ is non-degenerate.
\end{proposition}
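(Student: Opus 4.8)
The whole statement is a repackaging of the spectral theory of the non-negative self-adjoint operator $-L_\beta$ on $L^2(U,\mu_\beta)$ (whose closed form is $\E_\beta$), and the plan is to establish, in order: (a) the form domain equals the weighted Sobolev space $H^1(U,\mu_\beta)$; (b) $-L_\beta$ has compact resolvent, hence discrete spectrum; (c) the identification $\alpha_\beta=-(\lambda_1^\beta)^{-1}$; (d) the characterization of the saturators; (e) the non-degeneracy of $\lambda_1^\beta$ when $U=\R^d$. For (a): on a bounded domain, $\|\cdot\|_{H^1(U,\mu_\beta)}$ is equivalent to $\|\cdot\|_{H^1(U)}$ since $H\in W^{1,\infty}(U)$ keeps $e^{-\beta H}$ between two positive constants, and $\mathcal{A}$ is $H^1$-dense because any $g\in\C^\infty(\overline U)$ can be modified in an arbitrarily thin boundary collar so as to kill its normal derivative at the cost of an arbitrarily small $H^1$-perturbation; on $\R^d$ one uses a standard truncate-and-mollify argument, cheap in the weighted norm precisely because $\mu_\beta$ is a probability measure.

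For (b): the bounded case is the Rellich--Kondrachov compactness of $H^1(U)\hookrightarrow L^2(U)$. On $\R^d$ I would invoke the ground-state transformation \eqref{eq:ground_state_trafo}, which makes $L_\beta$ unitarily equivalent to the Schr\"odinger operator $\tilde L_\beta=\triangle-V_\beta$ with $V_\beta=\tfrac\beta2\big(\tfrac\beta2|\nabla H|^2-\triangle H\big)$, and then verify that $V_\beta$ is bounded below and $V_\beta(x)\to\infty$ as $|x|\to\infty$; for such a confining $V_\beta$ the form domain $\{g\in H^1(\R^d):\int_{\R^d}V_\beta g^2\,dx<\infty\}$ embeds compactly into $L^2(\R^d)$, so $\tilde L_\beta$ — hence $L_\beta$ — has compact resolvent. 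This simultaneously gives discreteness of the spectrum, finiteness of $\alpha_\beta$ (i.e. the Poincar\'e inequality itself), and the reason $\beta>0$ is required when $U=\R^d$. I expect this confining bound to be the main obstacle, since it is where the two hypotheses of \cref{thm:poincare_unbounded} must interact: the lower bound $V_\beta\ge-\tfrac\beta2\sup_{\R^d}\triangle H$ is immediate, while the divergence is proved by contradiction — if $V_\beta(x_n)$ stayed bounded along some $|x_n|\to\infty$, then $\triangle H(x_n)$ would be bounded above by $\sup_{\R^d}\triangle H<\infty$ and below by the boundedness of $V_\beta$ together with $|\nabla H|^2\ge0$, forcing $|\nabla H(x_n)|^2$ to remain bounded and contradicting $\lim_{|x|\to\infty}(|\nabla H|^2-\triangle H)=\infty$.

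Granting (b), the rest is soft. The constant function $1$ lies in $\D(L_\beta)$ with $L_\beta 1=0$, so $0\in\sigma(L_\beta)$; as $\E_\beta\ge0$ it is the top of the spectrum, and since $U$ is connected and $\mu_\beta$ has full support the corresponding eigenspace is exactly the constants. Discreteness (spectrum accumulating only at $-\infty$) together with $\dim L^2(U,\mu_\beta)=\infty$ shows that $\lambda_1^\beta=\sup\{\lambda\in\sigma(L_\beta):\lambda<0\}$ is attained and strictly negative, and the min-max principle — using that the lowest eigenspace consists of the constants — yields $\mu_1:=-\lambda_1^\beta=\inf\{\E_\beta(g)/\|g\|_\beta^2:g\in H^1(U,\mu_\beta),\ g\perp 1\}>0$; since $\var_{\mu_\beta}(f)=\left\|f-\int_U f\,d\mu_\beta\right\|_\beta^2$ and $\E_\beta(f)=\E_\beta\!\left(f-\int_U f\,d\mu_\beta\right)$, the number $\mu_1$ is precisely the reciprocal of the optimal constant $\alpha_\beta$, which is (c). For (d), expanding $g:=f-\int_U f\,d\mu_\beta$ in an orthonormal eigenbasis $(\psi_k)_{k\ge1}$ of $-L_\beta$ on $1^\perp$ with eigenvalues $0<-\lambda_1^\beta\le-\lambda_2^\beta\le\cdots$ gives $\E_\beta(g)=\sum_k(-\lambda_k^\beta)c_k^2\ge(-\lambda_1^\beta)\|g\|_\beta^2$, with equality iff $c_k=0$ whenever $\lambda_k^\beta<\lambda_1^\beta$, i.e. iff $g$ lies in the $\lambda_1^\beta$-eigenspace — which is exactly the claimed characterization of the functions saturating \eqref{eq:poincare} (the case $g=0$ being trivial). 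Finally, for (e): under \eqref{eq:ground_state_trafo} the eigenvalue $0$ of $\tilde L_\beta$ carries the strictly positive eigenfunction $\propto e^{-\beta H/2}$ and is therefore simple, the Schr\"odinger semigroup $e^{t\tilde L_\beta}$ being positivity improving; the simplicity of the next eigenvalue $\lambda_1^\beta$ then follows, via the same transformation, from the classical Sturm-type oscillation theory for Schr\"odinger operators on the line.
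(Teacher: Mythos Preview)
Your treatment of (a)--(d) is correct and is essentially an unpacking of what the paper does by citation. For the bounded case the paper records the equivalence $H^1(U,\mu_\beta)=H^1(U)$ (from boundedness of $H$), refers to Reed--Simon for $\overline{\mathcal A}^{\|\cdot\|_{H^1}}=H^1(U)$, and then declares discreteness ``standard''; for $U=\R^d$ it simply cites \cite[Theorem~XIII.47]{Reed1978}. You supply the underlying mechanisms (Rellich--Kondrachov, respectively the ground-state transform together with the confining-potential criterion for compact resolvent), and your contradiction argument showing $V_\beta(x)\to\infty$ under the hypotheses of \cref{thm:poincare_unbounded} is correct. The identification $\alpha_\beta=-(\lambda_1^\beta)^{-1}$ and the characterisation of the saturators via min--max and eigenfunction expansion match the paper's one-line appeal to the ``variational characterization''.

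The gap is in (e). Your argument for the simplicity of $\lambda_1^\beta$ invokes ``Sturm-type oscillation theory for Schr\"odinger operators on the line'', which is a strictly one-dimensional tool and does not extend to $\R^d$ for $d\geq 2$. Indeed the assertion itself fails there: for $H(x)=\tfrac12|x|^2$ on $\R^d$ the first nonzero eigenvalue of $-L_\beta$ equals $\beta$ and has multiplicity $d$. The paper disposes of this step by the same citation it uses for discreteness; that reference yields compact resolvent for Schr\"odinger operators with confining potential (and, via the Perron--Frobenius theory in the surrounding sections, simplicity of the \emph{ground} state --- which you also reproduce with your positivity-improving argument), but it does not deliver simplicity of the first excited state. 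So your restriction to $d=1$ is precisely the regime in which the non-degeneracy claim is actually valid, and neither your oscillation argument nor the paper's citation establishes it for general $d$.
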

\begin{proof}
	Consider first the setting of \cref{thm:poincare_compact}. Because of the boundedness of the potential $H$, it is clear that the spaces $H^{1}(U,\mu_\beta)$ and $H^{1}(U)$ coincide for each $\beta\geq 0$. Moreover, the respective norms are equivalent. It is known that, under mild regularity assumptions on $\partial U$ (which certainly comprise a piecewise smooth boundary), $\D(\E_\beta)=\overline{A}^{\|\cdot\|_{H^1(U)}}=H^1(U)=H^1(U,\mu_\beta)$, see e.g. \cite[p. 263]{Reed1978}. Consequently, $L_\beta$ is a second-order elliptic differential operator with Neumann boundary conditions. Discreteness of the spectrum is thus ensured by standard results. The remaining claims are immediate from a well-known variational characterization of the eigenvalues and continuity of both sides in the inequality \eqref{eq:poincare} with respect to the norm $\|\cdot\|_{H^1(U,\mu_\beta)}$.

	If $U=\R^d$, we employ \cite[Theorem XIII.47]{Reed1978} to deduce that $\tilde{L}_\beta$ (see \eqref{eq:ground_state_trafo}) has discrete spectrum and the largest non-zero eigenvalue $\lambda_1^\beta$ is non-degenerate. The remaining claims are then immediate. 
\end{proof}

In view of \cref{prop:neumann_advanced} we are led to compute the derivative of the first non-vanishing eigenvalue of $L_\beta$. If we forget about the technical details for a moment, then \cref{thm:poincare_compact,thm:poincare_unbounded} can be proven by the following formal computation: Let $\varphi_\beta$ be a normalized eigenvector of $L_\beta$ with eigenvalue $\lambda_\beta$. Then
\begin{align*}
	\partial_\beta\lambda_\beta&=\partial_\beta\int_U \varphi_\beta(x)L_\beta \varphi_\beta(x)\,\mu_\beta(dx)\\
	&=2\int_U \big(\partial_\beta \varphi_\beta\big)(x)L_\beta \varphi_\beta(x)\,\mu_\beta(dx)+\int_U \varphi_\beta(x)L_\beta \varphi_\beta(x)\big(\partial_\beta \varrho_\beta\big)(x)\,dx\\
	&\phantom{=}-\int_U \varphi_\beta(x) \nabla H(x)\cdot\nabla \varphi_\beta(x)\,\mu_\beta(dx),
\end{align*}
where $\varrho_\beta(x)\define Z_\beta^{-1}e^{-\beta H}(x)$. Since
\begin{align*}
	2\int_U &\big(\partial_\beta \varphi_\beta\big)(x)L_\beta \varphi_\beta(x)\,\mu_\beta(dx)+\int_U \varphi_\beta(x)L_\beta \varphi_\beta(x)\big(\partial_\beta \varrho_\beta\big)(x)\,dx\\
	&=\lambda_\beta\partial_\beta\|\varphi_\beta\|_{L^2(U,\mu_\beta)}^2=0,
\end{align*}
the identity \eqref{eq:bounded_derivative} follows. It is clear that this computation is problematic in a number of ways. The next section is therefore devoted to provide a rigorous backing of these manipulations.

\subsection{Perturbation Theory of Infinitesimal Generators}

Consider a family operators $(L_\beta,\D(L_\beta))_{\beta\in D}$ on a Hilbert space $\H$ indexed by a parameter $\beta$ with values in a non-empty domain $D\subset\mathbb{C}$. By this we mean that $D$ is open and connected. Of course, eventually we will be interested in taking $\beta\in D_\R\define D\cap\R$, but for now it is more convenient to allow complex $\beta$.

Let us begin with a definition:
\begin{defn}\label{def:holomorphic}
Let $D\subset\mathbb{C}$ be a non-empty domain. We say that $(L_\beta)_{\beta\in D}$ defines a \emph{holomorphic family} if the following hold:
\begin{enumerate}
	\item For each $\beta\in D$, $L_\beta$ is closed and has a non-empty resolvent set,
	\item for each $\beta_0\in D$, there is a $\rho_0\in\rho(L_{\beta_0})$ such that $\rho_0\in\rho(L_{\beta})$ for $\beta$ near $\beta_0$ and $\beta\mapsto(\rho_0-L_\beta)^{-1}$ defines a holomorphic function at $\beta_0$.
\end{enumerate}
If we can choose $D=\mathbb{C}$, we call $(L_\beta)_{\beta\in\mathbb{C}}$ an \emph{entire family}.
\end{defn}
Note that if $(L_\beta)_{\beta\in D}$ is a holomorphic family on $\H$ and $U:\H\to\H^\prime$ is unitary, then $(UL_\beta U^*)_{\beta\in D}$ is a holomorphic family on $\H^\prime$, see \cite[p. 20]{Reed1978}. In many applications, it turns out to be notoriously difficult to check \cref{def:holomorphic} directly. Instead, it is often more convenient to work with quadratic forms. We recall that a quadratic form $\big(\E,\D(\E)\big)$ is called \emph{sectorial} if there are a base point $y\in\mathbb{R}$ and a semi-angle $\theta\in[0,\frac{\pi}{2})$ such that the \emph{numerical range} of $\E$ is contained in the sector $S_{y,\theta}$:
\begin{equation*}
	\{\E(f):\,f\in\D(\E)\}\subset S_{y,\theta}\define\{w\in\mathbb{C}:\,|\arg(w-y)|\leq\theta\}.
\end{equation*}
Observe that this is equivalent to
\begin{equation}\label{eq:sector}
	\Re\big(\E(f)\big)\geq y\|f\|^2\;\text{and}\;\Big|\Im\big(\E(f)\big)\Big|\leq\tan(\theta)\Big(\Re\big(\E(f)\big)-y\|f\|^2\Big)\quad\forall f\in\D(\E).
\end{equation}
We have the following well-known generalization of the Friedrichs extension: For any closed, sectorial form $\big(\E,\D(\E)\big)$, there is a unique closed operator $\big(L,\D(L)\big)$ with $\E(f)=-\braket{f,Lf}$ for all $f\in\D(L)\subset\D(\E)$ and $\D(L)$ is a form core of $\E$.
\begin{defn}\label{def:type_b}
	Let $D\subset\mathbb{C}$ be a non-empty domain. We say that $(\E_\beta)_{\beta\in D}$ defines a \emph{holomorphic family of type (a)} if the following hold:
	\begin{enumerate}
		\item For each $\beta\in D$, $\big(\E_\beta,\D(\E_\beta)\big)$ is a closed, sectorial form,
		\item $\D(\E_\beta)=\D(\E)$ is independent of $\beta\in D$,
		\item for each $f\in\D(\E)$, $\beta\mapsto\E_\beta(f)$ is holomorphic on $D$.
	\end{enumerate}
	The associated operators $(L_\beta)_{\beta\in D}$ are called a \emph{holomorphic family of type (B)}.
\end{defn}
One can check (see \cite[Theorem VII.4.2]{Kato1995}) that a holomorphic family of type (B) is holomorphic in the sense of \cref{def:holomorphic}. The following result is proven in \cite[Theorem VII.4.8]{Kato1995}:
\begin{lemma}\label{lem:criterion_type_b}
	Suppose that $\big(\E_1,\D(\E_1)\big)$ is a closed, non-negative form. Let $\big(\E_2,\D(\E_2)\big)$ be a quadratic form with $\D(\E_1)\subset\D(\E_2)$. If there are $a,b>0$ such that
	\begin{equation*}
		\E_{2}(f)\leq a\E_1(f)+b\|f\|^2
	\end{equation*}
	for all $f\in\D(\E_1)$, then there is a domain $0\in D\subset\mathbb{C}$ such that $\E_\beta\define\E_1+\beta\E_2$ is closable and its closure defines a holomorphic family of type (a). If $a>0$ can be chosen arbitrarily small, then $(\E_\beta)_{\beta\in\mathbb{C}}$ is actually an entire family of type (a).
\end{lemma}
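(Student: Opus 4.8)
The plan is to verify, for $\beta$ in a small complex disc around $0$, the three defining properties of a holomorphic family of type (a) in \cref{def:type_b}, working throughout with the form $\E_\beta\define\E_1+\beta\E_2$ on the $\beta$-independent domain $\D(\E_\beta)\define\D(\E_1)$ (meaningful since $\D(\E_1)\subset\D(\E_2)$). Property (iii) is immediate: for each fixed $f\in\D(\E_1)$ the map $\beta\mapsto\E_\beta(f)=\E_1(f)+\beta\E_2(f)$ is affine in $\beta$, hence entire. So the work lies in properties (i) and (ii).

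For sectoriality I would fix $\beta$ with $a|\beta|<1$ and use the hypothesis in the two-sided form $|\E_2(f)|\leq a\E_1(f)+b\|f\|^2$ — it is this relative bound that is actually needed (already for real $\beta$, in order to keep $\Re\E_\beta$ bounded below), and I expect this to be the only genuinely delicate point; everything else is bookkeeping against the definition of a closed sectorial form. Splitting $\E_\beta(f)$ into real and imaginary parts gives
\begin{align*}
\Re\E_\beta(f)&=\E_1(f)+\Re(\beta)\,\E_2(f)\geq(1-a|\beta|)\E_1(f)-b|\beta|\,\|f\|^2,\\
\big|\Im\E_\beta(f)\big|&=\big|\Im(\beta)\big|\,|\E_2(f)|\leq a|\beta|\,\E_1(f)+b|\beta|\,\|f\|^2.
\end{align*}
Taking the base point $y\define-b|\beta|-1$, the first line yields $\Re\E_\beta(f)-y\|f\|^2\geq(1-a|\beta|)\E_1(f)+\|f\|^2\geq0$, and combining this with the second line gives $\big|\Im\E_\beta(f)\big|\leq C_\beta\big(\Re\E_\beta(f)-y\|f\|^2\big)$ with $C_\beta\define\max\big(\tfrac{a|\beta|}{1-a|\beta|},\,b|\beta|\big)<\infty$. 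This is exactly \eqref{eq:sector} with semi-angle $\theta=\arctan C_\beta<\tfrac\pi2$, so each $\E_\beta$ is sectorial.

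To obtain closedness and domain-independence (properties (i) and (ii)) I would compare the form norm of $\E_\beta$ with that of $\E_1$. The bound above already gives $\Re\E_\beta(f)-y\|f\|^2\geq(1-a|\beta|)\E_1(f)+\|f\|^2$, and the same estimates produce the matching upper bound $\Re\E_\beta(f)-y\|f\|^2\leq(1+a|\beta|)\E_1(f)+(2b|\beta|+1)\|f\|^2$; hence the form norm $\|f\|_{\E_\beta}\define\big(\Re\E_\beta(f)-y\|f\|^2+\|f\|^2\big)^{1/2}$ is equivalent to $\|f\|_{\E_1}=\big(\E_1(f)+\|f\|^2\big)^{1/2}$ on $\D(\E_1)$. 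Since $\E_1$ is closed, $\big(\D(\E_1),\|\cdot\|_{\E_1}\big)$ is complete, so $\big(\D(\E_1),\|\cdot\|_{\E_\beta}\big)$ is complete as well; therefore $\E_\beta$ is already closed on $\D(\E_1)$ — in particular closable with closure equal to itself — and its form domain $\D(\E_1)$ is independent of $\beta$. Thus $(\E_\beta)_{\beta\in D}$ is a holomorphic family of type (a) on the open disc $D\define\{\beta\in\mathbb{C}:a|\beta|<1\}$, which contains $0$.

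Finally, for the entire-family statement: if for every $a>0$ a corresponding $b=b(a)$ can be found making the hypothesis hold, then given any $\beta_0\in\mathbb{C}$ one selects $a<|\beta_0|^{-1}$ with its associated $b$, so that $\beta_0\in D$; since $\E_1+\beta\E_2$ — and hence its closure — is uniquely determined independently of which admissible pair $(a,b)$ is used, these local families are mutually consistent and patch together to an entire family of type (a) on all of $\mathbb{C}$.
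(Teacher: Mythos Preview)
Your argument is correct and is essentially the standard proof of this result as given in Kato's book; the paper itself does not supply a proof but simply cites \cite[Theorem VII.4.8]{Kato1995}. Your observation that the two-sided bound $|\E_2(f)|\leq a\E_1(f)+b\|f\|^2$ is what is actually needed (and what Kato states) is well taken---the one-sided inequality in the paper's formulation is a minor imprecision, and the application in the proof of \cref{thm:poincare_compact} does in fact verify the absolute-value version.
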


It is well known that, if $(L_\beta)_{\beta\in D}$ is a holomorphic family of type (B) and has compact resolvent for some $\beta_0\in D$, then the same holds for any $\beta\in D$, see \cite[Theorem VII.2.4]{Kato1995}. In the setting of \cref{thm:poincare_unbounded} we can therefore not expect that the family of infinitesimal generators is analytic near $\beta_0=0$. In fact, the Ornstein-Uhlenbeck operator $L_\beta=\triangle-\frac12\beta |x|^2$ is well known to be of compact resolvent for any $\beta>0$ and, as we shall see below, $(L_\beta)_{\beta\in D}$ is a holomorphic family of type (B) on a neighborhood of $(0,\infty)$. However, it is very well known that the spectrum of the Laplacian is continuous, whence it cannot be of compact resolvent.

We say that two inner products on a linear space are equivalent if the induced norms are equivalent. Our main abstract result in this section is as follows:
\begin{proposition}\label{prop:derivative}
	Let $D\subset\mathbb{C}$ be a domain and let $\H$ be a Hilbert space equipped with a family of pairwise equivalent inner products $\{\braket{\cdot,\cdot}_\beta\}_{\beta\in D_\R}$. Let $(L,\D(L))$, $(V_1,\D(V_1))$, and $(V_2,\D(V_2))$ be densely defined operators on $\H$. Suppose that $L_\beta\define L+\beta V_1+\beta^2 V_2$, $\beta\in D$, defines a holomorphic family and, for each $\beta\in D$, $L_\beta$ has compact resolvent. If $L_\beta$ is self-adjoint with respect to $\braket{\cdot,\cdot}_\beta$ for each $\beta\in D_\R$, then the following are true:
	\begin{enumerate}
	 	\item\label{it:all_eigenvalues} The eigenvalues $\{\lambda_n(\beta)\}_{n\in\N}$ (counted with multiplicity) are analytic near each $\beta_0\in D_\R$.
	 	\item\label{it:single_eigenvalues}  If $\lambda_n(\beta_0)$ is non-degenerate, then we have the formula
		\begin{equation}\label{eq:feynman_hellmann}
			\frac{d}{d\beta}\lambda_n(\beta)\Big|_{\beta=\beta_0}=\braket{\varphi_n(\beta_0),(V_1+2\beta_0 V_2) \varphi_n(\beta_0)}_{\beta_0},
		\end{equation}
	where $\varphi_n(\beta)$ is the associated normalized eigenfunction.
	 \end{enumerate}
\end{proposition}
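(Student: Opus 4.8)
The plan is to reduce both claims to Kato's analytic perturbation theory for holomorphic families with compact resolvent \cite[Ch.~VII]{Kato1995}; the only genuinely new ingredient is the extra book-keeping forced by the $\beta$-dependent inner products, which I deal with by never transporting anything away from the base point $\beta_0$ at which we want to differentiate.

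For (i), fix $\beta_0\in D_\R$. Since $(L_\beta)_{\beta\in D}$ is holomorphic and each $L_\beta$ has compact --- hence discrete-spectrum --- resolvent, Kato's theory provides a complex neighborhood of $\beta_0$ on which the eigenvalues, organized into the groups surrounding the individual eigenvalues of $L_{\beta_0}$, are branches of analytic functions whose only singularities are algebraic branch points forming a discrete \emph{exceptional set}, and on which the Riesz projection onto each group depends holomorphically on $\beta$. To show the exceptional set avoids $D_\R$ I would invoke the remaining hypotheses: for every $\beta\in D_\R$ the operator $L_\beta$ is self-adjoint with respect to $\braket{\cdot,\cdot}_\beta$, an inner product inducing the same topology on $\H$ as $\braket{\cdot,\cdot}_{\beta_0}$, so $\sigma(L_\beta)\subset\R$ for all $\beta$ in a real neighborhood of $\beta_0$. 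A Puiseux expansion then excludes a genuine cycle of period $p\geq 2$ at a real point $\beta_*$: writing the cycle as $\lambda^{(h)}(\beta)=\alpha_0+\sum_{k\geq 1}\alpha_k\,\omega^{kh}(\beta-\beta_*)^{k/p}$ with $\omega=e^{2\pi i/p}$, the demand that each $\lambda^{(h)}(\beta)$ be real for real $\beta$ on \emph{both} sides of $\beta_*$ forces, by induction on $k$, every $\alpha_k$ to be real and to vanish whenever $p\nmid k$; but then $\lambda^{(h)}$ does not depend on $h$ and is analytic in $\beta$, contradicting $p\geq 2$. Hence the exceptional set is disjoint from $D_\R$, and after shrinking the neighborhood of $\beta_0$ every eigenvalue of the $\lambda_0$-groups is single-valued and holomorphic there; this is (i). (This reproduces the mechanism of \cite[Theorem~VII.3.9]{Kato1995}, which cannot be quoted verbatim since self-adjointness holds only with respect to the varying inner products.)

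For (ii), suppose $\lambda_n(\beta_0)$ is non-degenerate. By (i) and continuity it remains a simple isolated eigenvalue on a neighborhood $N\ni\beta_0$, on which the Riesz projection $P_n(\beta)=\frac{1}{2\pi i}\oint(z-L_\beta)^{-1}\,dz$ is an analytic, rank-one-projection-valued map. Put $\psi(\beta)\define P_n(\beta)\varphi_n(\beta_0)$; then $\psi\colon N\to\H$ is analytic, $\psi(\beta_0)=\varphi_n(\beta_0)$, $\psi(\beta)\in\ran P_n(\beta)$, and $L_\beta\psi(\beta)=\lambda_n(\beta)\psi(\beta)$. The $\braket{\cdot,\cdot}_\beta$-normalized eigenfunction $\varphi_n(\beta)$ is merely a scalar multiple of $\psi(\beta)$, but only its value at $\beta_0$ enters \eqref{eq:feynman_hellmann}, so no renormalization --- which would entail differentiating $\beta\mapsto\braket{\cdot,\cdot}_\beta$ --- is ever needed. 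Differentiating $L_\beta\psi(\beta)=\lambda_n(\beta)\psi(\beta)$ at $\beta_0$, where the left-hand side is handled by writing $L_\beta\psi(\beta)=\big(L_\beta P_n(\beta)\big)\psi(\beta)$ as the product of an analytic bounded-operator-valued map and an analytic vector-valued one (this also yields $\psi'(\beta_0)\in\D(L_{\beta_0})$), gives
\begin{equation*}
	(V_1+2\beta_0 V_2)\varphi_n(\beta_0)+L_{\beta_0}\psi'(\beta_0)=\Big(\frac{d}{d\beta}\lambda_n(\beta)\big|_{\beta=\beta_0}\Big)\varphi_n(\beta_0)+\lambda_n(\beta_0)\psi'(\beta_0).
\end{equation*}
Taking $\braket{\varphi_n(\beta_0),\,\cdot\,}_{\beta_0}$ of both sides and using that $L_{\beta_0}$ is self-adjoint for $\braket{\cdot,\cdot}_{\beta_0}$ with $L_{\beta_0}\varphi_n(\beta_0)=\lambda_n(\beta_0)\varphi_n(\beta_0)$ and $\|\varphi_n(\beta_0)\|_{\beta_0}=1$, the two terms containing $\psi'(\beta_0)$ cancel and \eqref{eq:feynman_hellmann} falls out. (Equivalently, endow $\H$ with $\braket{\cdot,\cdot}_{\beta_0}$ and quote the first-order perturbation formula for a simple eigenvalue of a holomorphic family from \cite[Ch.~VII]{Kato1995}: since $L_{\beta_0}$ is self-adjoint there and $\lambda_n(\beta_0)$ is real, the eigenfunction of the adjoint coincides with $\varphi_n(\beta_0)$.)

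The two points needing care are: carrying out the Puiseux induction in (i) (routine but delicate, as one must propagate reality of the coefficients alongside the divisibility constraint $p\mid k$), and, in (ii), differentiating $\beta\mapsto L_\beta\psi(\beta)$ in spite of the unboundedness of $L_\beta$ --- the factorization through the bounded, holomorphic map $L_\beta P_n(\beta)$ is exactly what makes this legitimate and is where the holomorphic-family hypothesis earns its keep. The reason for routing the whole computation through the fixed inner product $\braket{\cdot,\cdot}_{\beta_0}$ is precisely that we then never have to differentiate $\beta\mapsto\braket{\cdot,\cdot}_\beta$, about which nothing beyond pairwise equivalence has been assumed.
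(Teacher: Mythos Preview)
Your proposal is correct and arrives at the same conclusions, but the execution differs from the paper's in both parts. For (i), the paper does not run the Puiseux argument itself: it invokes Kato's transformation function $U_\beta$ (holomorphic, with $P_\beta=U_\beta P_0 U_\beta^{-1}$ and $U_\beta$ unitary for real $\beta$ with respect to $\braket{\cdot,\cdot}_\beta$) to conjugate the problem down to the finite-dimensional operator $\bar L_\beta=P_0U_\beta^{-1}L_\beta U_\beta P_0$, which is symmetric for real $\beta$, and then cites the finite-dimensional analytic-eigenvalue theorem \cite[Theorem~II.6.1]{Kato1995}. Your direct Puiseux argument is essentially the proof of that cited theorem, redone to accommodate the moving inner products; it is more self-contained but longer, while the paper's reduction is slicker but leans on the (quoted, not verified) unitarity of $U_\beta$ in the variable metric. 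For (ii), the paper does not differentiate the eigenvalue equation. Instead it writes the Rayleigh-type quotient
\[
\lambda(\beta)=\frac{\braket{\varphi(0),L_\beta P(\beta)\varphi(0)}_0}{\braket{\varphi(0),P(\beta)\varphi(0)}_0}
\]
and expands $P(\beta)$ to first order via the second resolvent identity, reading off the linear coefficient $\braket{\varphi(0),V_1\varphi(0)}_0$. Your Feynman--Hellmann differentiation is more elementary and avoids the resolvent series, at the cost of the extra care you flag about differentiating $L_\beta\psi(\beta)$; the paper's approach is more mechanical and generalizes straightforwardly to higher-order coefficients. Both proofs share the key observation that everything can be computed in the \emph{fixed} inner product $\braket{\cdot,\cdot}_{\beta_0}$, so no regularity of $\beta\mapsto\braket{\cdot,\cdot}_\beta$ is ever needed.
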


\begin{remark}\label{rem:crossings}
	We insist that under the assumptions of \cref{prop:derivative} crossings of the eigenvalues may occur. In other words, if we assume that $\lambda_1(0)\leq\lambda_2(0)\leq\cdots$, this may not hold for arbitrary $\beta\in D_\R$! The Poincar\'e constant is in general not differentiable at a crossing of the smallest non-zero eigenvalue.
\end{remark}

\begin{proof}[Proof of \cref{prop:derivative}]
	Without any loss of generality, we may assume that $0\in D$ and $\beta_0=0$. We also drop the index of the eigenvalue and the eigenvector for brevity throughout the proof. Since $L$ has compact resolvent, we can find an $\varepsilon>0$ such that $\sigma(L)\cap B_\varepsilon\big(\lambda(0)\big)=\{\lambda(0)\}$, albeit with possible degeneracy. The projection-valued function 
	\begin{equation}\label{eq:projection}
		P_\beta=\frac{1}{2\pi i}\oint_{\partial B_{\varepsilon}(\lambda(0))}\big(z-L_\beta\big)^{-1}\,dz
	\end{equation}
	is thus holomorphic near $0$ and projects on the (potentially multi-dimensional) eigen\-space of $\lambda(\beta)$. In other words, we have that $\sigma\big(P_\beta L_\beta P_\beta\big)=\sigma(L_\beta)\cap B_\varepsilon\big(\lambda(0)\big)$. Moreover, for real $\beta$, $P_\beta$ is symmetric with respect to $\braket{\cdot,\cdot}_\beta$. A classical result of Kato \cite{Kato1950} (see also \cite[p. 386]{Kato1995}) states that there is a holomorphic family of bounded, invertible operators $\{U_\beta\}$ for $\beta$ near $0$ such that $P_\beta=U_\beta P_0 U_\beta^{-1}$ and, for real $\beta$, $U_\beta$ is unitary with respect to $\braket{\cdot,\cdot}_\beta$. We now set $\bar{L}_\beta\define P_0U_\beta^{-1}L_\beta U_\beta P_0$. This defines a finite-dimensional holomorphic family, which is symmetric for real $\beta$. Analyticity of the eigenvalues therefore follows by a standard argument, see e.g. \cite[Theorem II.6.1]{Kato1995}. This concludes the proof of \ref{it:all_eigenvalues}.

	For point \ref{it:single_eigenvalues} we compute the series expansion for non-degenerate $\lambda(0)$. To this end, we write $\bar{L}$, $\bar{V}_1$, and $\bar{V}_2$ for the restrictions of these operators to the range of $P_0$. Let $\varphi(0)$ be the normalized eigenvector with eigenvalue $\lambda(0)$. For $\beta$ near $0$, we see that $P(\beta)\varphi(0)\neq 0$ (since $P(\beta)\varphi(0)\to \varphi(0)$ as $\beta\to 0$). Consequently,
	\begin{align*}
		\lambda(\beta)&=\frac{\braket{\varphi(0),\bar{L}_\beta P(\beta)\varphi(0)}_0}{\braket{\varphi(0),P(\beta)\varphi(0)}_0}\\
		&=\lambda(0)+\beta\frac{\braket{\varphi(0),\bar{V}_1 P(\beta)\varphi(0)}_0}{\braket{\varphi(0),P(\beta)\varphi(0)}_0}+\beta^2\frac{\braket{\varphi(0),\bar{V}_2 P(\beta)\varphi(0)}_0}{\braket{\varphi(0),P(\beta)\varphi(0)}_0}.
	\end{align*}
	 The integrand in \eqref{eq:projection} is holomorphic on the contour of integration and, iterating the second resolvent identity, we can expand
	\begin{equation*}
		\big(z-(\bar{L}+\beta \bar{V}_1+\beta^2 \bar{V}_2)\big)^{-1}=\big(z-\bar{L}\big)^{-1}+\beta\big(z-\bar{L})^{-1}\bar{V}_1\big(z-\bar{L})^{-1}+\mathcal{O}(\beta^2),
	\end{equation*}
	whence
	\begin{align*}
		\lambda(\beta)&=\lambda(0)+\beta\frac{\braket{\varphi(0),\bar{V}_1 \varphi(0)}_0+\mathcal{O}(\beta)}{1+\mathcal{O}(\beta)}+\mathcal{O}(\beta^2)\\
		&=\lambda(0)+\beta\braket{\varphi(0),V_1 \varphi(0)}_0+\mathcal{O}(\beta^2).
	\end{align*}
	Here, we used that 
	\begin{equation*}
		\frac{1}{2\pi i}\oint_{\partial B_\varepsilon(\lambda(0))}(z-L)^{-1}\varphi(0)\,dz=\frac{1}{2\pi i}\oint_{\partial B_\varepsilon(\lambda(0))}\big(z-\lambda(0)\big)^{-1}\varphi(0)\,dz=\varphi(0).
	\end{equation*}
\end{proof}

\section{Proofs of the Main Results}\label{sec:monotonicity}

Given a self-adjoint operator $\big(L,\D(L)\big)$ let $\big(\E_L,\D(\E_L)\big)$ denote the unique, closed quadratic form such that $\E_L(f)\define-\braket{f,Lf}$ for all $f\in\D(L)$. Combining the results of the previous section, it is now easy to deduce \cref{thm:poincare_compact}:
\begin{proof}[Proof of \cref{thm:poincare_compact}]
	We choose $\H=L^2(U)$ and observe that, since $H$ is bounded,
	\begin{equation*}
		\braket{f,g}_\beta=\frac{1}{Z_\beta}\int_U f(x)g(x)e^{-\beta H(x)}\,dx
	\end{equation*}
	defines a family of pairwise equivalent inner products.	Define $V\define\-\nabla H\cdot\nabla$. Since $\nabla H$ is bounded, it follows that
	\begin{equation*}
		\E_{V}(f)\leq\|\nabla H\|_\infty\left(\frac{\varepsilon}{2}\E_\triangle(f)+\frac{1}{2\varepsilon}\|f\|^2\right)
	\end{equation*}
	for any $\varepsilon>0$. Owing to \cref{lem:criterion_type_b}, $L_\beta$ defines an entire family. Standard results in the theory of partial differential equations tell us that $L_\beta$ has compact resolvent for any $\beta\geq 0$, see e.g. \cite[Theorem 3.5.1]{Evans1998}. The theorem is now an immediate consequence of \cref{prop:derivative}.
\end{proof}

For unbounded domains the situation becomes of course much more delicate. While on bounded domains $L_\beta$ actually has compact resolvent for all $\beta\in\mathbb{C}$, this is now no longer true. However, we shall see below that, under the assumptions of \cref{thm:poincare_unbounded}, the spectrum is still discrete for $\beta>0$. Before that, let us check that $(\tilde{L}_\beta)_{\beta\in D}$ constitutes a holomorphic family of type (B) for 
\begin{equation*}
	D\define\left\{z\in\mathbb{C}\setminus\{0\}:\,|\arg z|<\frac{\pi}{6}\right\}.
\end{equation*}
To this end, we observe that the operator \eqref{eq:ground_state_trafo} naturally decomposes as
\begin{align*}
	\tilde{L}_\beta f(x)&=\triangle f(x)-\frac12\left(\frac{\beta^2}{2}|\nabla H(x)|^2-\beta\triangle H(x)\right)f(x)\\
	&\define \triangle f(x) - \left(\beta^2 V_1(x)-\beta V_2^+(x)+\beta V_2^-(x)\right)f(x),
\end{align*}
where 
\begin{equation*}
	V_1(x)\define\frac14|\nabla H(x)|^2,\quad
	V_2^+(x)\define\frac12\big(0\vee \triangle H(x)\big),\quad V_2^-(x)\define-\frac12\big(0\wedge \triangle H(x)\big).
\end{equation*}
Without further notice, we shall assume the conditions of \cref{thm:poincare_unbounded} in the following two lemmas:

\begin{lemma}\label{lem:sector}
	Let $\E_{\triangle+\beta V^+_2}\define \E_{\triangle}+\beta \E_{V^+_2}$ with domain $\D(\E_{\triangle+\beta V^+_2})\define\D(\E_\triangle)$. Then, for each $\beta\in D$ and $\theta\in(0,\frac{\pi}{2})$, there is a $y<0$ such that the numerical range of $\E_{\triangle+\beta V^+_2}$ is contained in $S_{y,\theta}$.
\end{lemma}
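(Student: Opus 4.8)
The plan is to exploit that the standing hypothesis $\sup_{x\in\R^d}\triangle H(x)<\infty$ of \cref{thm:poincare_unbounded} renders $V_2^+=\tfrac12\big(0\vee\triangle H\big)$ a \emph{bounded}, non-negative function; write $c\define\norm{V_2^+}_{L^\infty(\R^d)}<\infty$. The associated form $\E_{V_2^+}(f)=-\int_{\R^d}V_2^+(x)|f(x)|^2\,dx$ is then real-valued and obeys $|\E_{V_2^+}(f)|\leq c\norm{f}^2$ for every $f\in L^2(\R^d)$; in particular $\D(\E_\triangle)\subset L^2(\R^d)=\D(\E_{V_2^+})$, so $\E_{\triangle+\beta V_2^+}$ is a well-defined, densely defined form on $\D(\E_\triangle)$. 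Thus $\E_{V_2^+}$ is merely a \emph{bounded} form perturbation of the closed, non-negative form $\E_\triangle(f)=\norm{\nabla f}^2$, and the lemma is really the soft statement that sectoriality is stable under bounded perturbations — unlike the $\beta^2 V_1$ and $\beta V_2^-$ contributions treated later, whose potentials are unbounded, this piece requires nothing about the sector $D$ and the argument below goes through for every $\beta\in\mathbb C$.

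With this reduction I would simply verify \eqref{eq:sector} by hand. Fix $\beta\in D$ and $\theta\in(0,\tfrac\pi2)$, and for $f\in\D(\E_\triangle)$ put $s\define\E_{V_2^+}(f)\in\R$, so that $|s|\leq c\norm{f}^2$ and $\E_{\triangle+\beta V_2^+}(f)=\norm{\nabla f}^2+\beta s$, where $\norm{\nabla f}^2\geq 0$ is real. Hence $\Re\big(\E_{\triangle+\beta V_2^+}(f)\big)=\norm{\nabla f}^2+(\Re\beta)\,s$ and $\Im\big(\E_{\triangle+\beta V_2^+}(f)\big)=(\Im\beta)\,s$. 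Choosing $y\define-\big(|\beta|+\tfrac{|\Im\beta|}{\tan\theta}\big)c-1<0$, the bound $\Re\big(\E_{\triangle+\beta V_2^+}(f)\big)\geq-|\beta|c\norm{f}^2\geq y\norm{f}^2$ gives the first inequality in \eqref{eq:sector}; for the second, from $\Re\big(\E_{\triangle+\beta V_2^+}(f)\big)-y\norm{f}^2\geq\big(-|\beta|c-y\big)\norm{f}^2\geq\tfrac{|\Im\beta|c}{\tan\theta}\norm{f}^2$ together with $\big|\Im\big(\E_{\triangle+\beta V_2^+}(f)\big)\big|=|\Im\beta|\,|s|\leq|\Im\beta|c\norm{f}^2$ one reads off $\big|\Im\big(\E_{\triangle+\beta V_2^+}(f)\big)\big|\leq\tan\theta\big(\Re\big(\E_{\triangle+\beta V_2^+}(f)\big)-y\norm{f}^2\big)$. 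This places the numerical range in $S_{y,\theta}$ with $y<0$.

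I do not anticipate a genuine obstacle; the two points requiring care are (i) making sure that $\sup_x\triangle H(x)<\infty$ is really what forces $V_2^+\in L^\infty$ — without it the perturbation is unbounded and the argument breaks down — and (ii) bookkeeping the complex coefficient, since $\beta\E_{V_2^+}(f)$ contributes the non-trivial imaginary part $(\Im\beta)s$, which has to be dominated uniformly in $f$ by a multiple of $\Re\big(\E_{\triangle+\beta V_2^+}(f)\big)-y\norm{f}^2$; that is precisely arranged by taking $|y|$ large relative to $|\Im\beta|c/\tan\theta$. One should also record in passing that $\D(\E_\triangle)$ is dense, so the form $\big(\E_{\triangle+\beta V_2^+},\D(\E_\triangle)\big)$ is densely defined, as implicitly required.
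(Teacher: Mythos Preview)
Your proof is correct and follows essentially the same approach as the paper: both exploit that $\sup_x\triangle H(x)<\infty$ makes $V_2^+$ a bounded function, so that $|\Im(\E_{\triangle+\beta V_2^+}(f))|=|\Im\beta|\,|\E_{V_2^+}(f)|\leq c\,|\Im\beta|\,\|f\|^2$, after which one simply picks $y<0$ negative enough to force the sector condition \eqref{eq:sector}. Your version is just more explicit---you write down a concrete $y$ and verify both inequalities in \eqref{eq:sector} by hand---whereas the paper records the imaginary-part bound and then says the lemma ``follows at once''.
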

\begin{proof}
	It is clear that $|V_2^+(x)|\leq\frac12 \big(\sup_{x\in\R^d}\triangle H(x)\vee 0\big)\define a$. Therefore,
	\begin{equation*}
		\Big|\Im\big(\E_{\triangle+\beta V^+_2}(f)\big)\Big|=\big|\Im(\beta)\big|\big|\E_{\beta V^+_2}(f)\big|\leq a\big|\Im(\beta)\big| \|f\|^2
	\end{equation*}
	and recalling \eqref{eq:sector} the lemma follows at once.
\end{proof}

\begin{lemma}\label{lem:sectorial}
	For each $\beta\in D$, the quadratic form $\tilde{\E}_\beta(f)\define-\braket{f,\tilde{L}_\beta f}$, $\D(\tilde{\E}_\beta)\define\D(\E_{\triangle})\cap\D(\E_{-V_1})\cap\D(\E_{-V_2^-})$, is densely defined, closed, and sectorial.
\end{lemma}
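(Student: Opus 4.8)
The plan is to write $\tilde{\E}_\beta$ as a sum of three pieces whose sectoriality and closedness are individually transparent, and then to add them up. Expanding $-\braket{f,\tilde{L}_\beta f}$ with the decomposition of $\tilde{L}_\beta$ recorded above gives
\begin{equation*}
	\tilde{\E}_\beta=\E_{\triangle+\beta V_2^+}+\beta^2\E_{-V_1}+\beta\E_{-V_2^-},
\end{equation*}
with $\E_{-V_1}(f)=\int_{\R^d}V_1|f|^2\,dx\geq 0$ and $\E_{-V_2^-}(f)=\int_{\R^d}V_2^-|f|^2\,dx\geq 0$ since $V_1,V_2^-\geq 0$, and $\D(\E_{\triangle+\beta V_2^+})=\D(\E_\triangle)=H^1(\R^d)$ because $V_2^+$ is bounded. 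As $H$ is twice weakly differentiable we have $V_1,V_2^\pm\in L^1_{\mathrm{loc}}(\R^d)$, so $\C_c^\infty(\R^d)$ is contained in each of the three form domains, hence in $\D(\tilde{\E}_\beta)=\D(\E_\triangle)\cap\D(\E_{-V_1})\cap\D(\E_{-V_2^-})$; this settles density.

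For sectoriality, fix $\theta=\pi/3$. By \cref{lem:sector} there is a $y=y(\beta)<0$ such that the numerical range of $\E_{\triangle+\beta V_2^+}$ lies in $S_{y,\theta}$. Since $\E_{-V_1}(f)\geq 0$ and $|\arg\beta^2|=2|\arg\beta|<\pi/3$, the numerical range of $\beta^2\E_{-V_1}$ lies in the closed cone $S_{0,\pi/3}$, and similarly that of $\beta\E_{-V_2^-}$ lies in $S_{0,\pi/6}\subset S_{0,\pi/3}$. Because $S_{0,\pi/3}$ is a convex cone, adding up shows that the numerical range of $\tilde{\E}_\beta$ is contained in $S_{y,\pi/3}$, a proper sector; thus $\tilde{\E}_\beta$ is sectorial.

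For closedness I would add the three summands one at a time. First, $\E_{\triangle+\beta V_2^+}=\E_\triangle+\beta\E_{V_2^+}$ is closed: $\E_\triangle$ is closed on $H^1(\R^d)$ and $\E_{V_2^+}$ is a bounded form ($|\E_{V_2^+}(f)|\leq a\|f\|^2$, as $|V_2^+|\leq a$), so the form norm of $\E_{\triangle+\beta V_2^+}$ is equivalent to that of $\E_\triangle$ and completeness is preserved. Second, $\E_{-V_1}$ and $\E_{-V_2^-}$ are of the form $f\mapsto\int g|f|^2\,dx$ with $g\geq 0$ measurable, hence closed and non-negative (given a sequence Cauchy in the form norm, pass to a subsequence converging a.e.\ on $\{g>0\}$ to identify the limit); since $\Re\beta>0$ and $\Re\beta^2>0$ for $\beta\in D$, the scaled forms $\beta^2\E_{-V_1}$ and $\beta\E_{-V_2^-}$ are again closed and sectorial, with numerical range in $S_{0,\pi/3}$. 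Third, I invoke the elementary principle that if $\E'$ is closed sectorial with vertex $y'\leq 0$ and $\E''$ is closed sectorial with vertex $0$ and $\Re\E''\geq 0$, then $\E'+\E''$ on $\D(\E')\cap\D(\E'')$ is closed: writing $\|f\|_{\E}^2=\Re\E(f)+(1-y')\|f\|^2$ for the form norm (with $y'$ serving as a vertex also for $\E'+\E''$), one has $\|f\|_{\E'+\E''}^2=\|f\|_{\E'}^2+\Re\E''(f)$, which dominates both $\|f\|_{\E'}^2$ and $\|f\|_{\E''}^2$ because $\Re\E''\geq 0$ and $\|f\|_{\E'}^2\geq\|f\|^2$; hence a $\|\cdot\|_{\E'+\E''}$-Cauchy sequence is Cauchy for $\|\cdot\|_{\E'}$ and $\|\cdot\|_{\E''}$, converges in $L^2(\R^d)$ to a common limit lying in $\D(\E')\cap\D(\E'')$, and converges in $\|\cdot\|_{\E'+\E''}$ as well. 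Applying this twice, we conclude that $\tilde{\E}_\beta$ is closed.

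I expect the main obstacle to be exactly the place where the above departs from \cref{lem:criterion_type_b}: $V_1$ (and $V_2^-$) may grow at infinity, so $\E_{-V_1}$ is \emph{not} relatively form-bounded by $\E_\triangle$, and the sum cannot be treated as a small perturbation of $\E_\triangle$. What rescues the argument is that forms given by integration against a non-negative function are automatically closed, together with the stability of closedness under adding sectorial forms whose sectors all point into a common half-plane (after shifting $\E_{\triangle+\beta V_2^+}$ to vertex $0$) -- precisely the comparison-of-form-norms step above; see \cite[\S VI.1]{Kato1995} for the relevant background on sectorial forms.
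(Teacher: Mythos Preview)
Your proof is correct and follows essentially the same route as the paper's: the same decomposition $\tilde{\E}_\beta=\E_{\triangle+\beta V_2^+}+\beta^2\E_{-V_1}+\beta\E_{-V_2^-}$, the same appeal to \cref{lem:sector} for sectoriality combined with the cone property of $S_{0,\theta}$, and for closedness the same principle that the form norm of the sum dominates each summand's form norm (you phrase this as an iterated ``add a closed sectorial form with vertex $0$'' step, while the paper compresses it into a single form-norm comparison). Your treatment of closedness is in fact more explicit than the paper's somewhat terse version, but the underlying mechanism is identical.
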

\begin{proof}
	Recall that, for any function $V:\R^d\to\R$, the multiplication operator $(Vf)(x)\define V(x)f(x)$ is self-adjoint on the domain $\D(L)\define\big\{f\in L^2(\R^d):\, Vf\in L^2(\R^d)\big\}$. Therefore, we certainly have that $\C_c^\infty(\R^d)\subset\D(\tilde{\E}_\beta)$ and hence the latter is dense. Fix $\theta\in\big(0,\frac{\pi}{2}-3\arg(\beta)\big)$ and let $y<0$ be the associated base point furnished by \cref{lem:sector}. The forms $\beta^2 \E_{V_1}$ and $\beta\E_{V_2^-}$ are sectorial with $S_{y,2\arg(\beta)}$ and $S_{y,\arg(\beta)}$ respectively. Since $S_{y,\theta}+S_{y,\tilde{\theta}}\subset S_{y,\theta+\tilde{\theta}}$, it follows that $\tilde{\E}_\beta=\E_{\triangle+\beta V_2^+}-\beta^2 \E_{V_1}-\beta \E_{V_2^-}$ is sectorial.

	It remains to show that $\tilde{\E}_\beta$ is closed. To this end, we recall that---without loss of generality---we may consider the form norm
	\begin{equation*}
		\vertiii{f}_{\tilde{\E}_\beta}\define\Big|\Braket{f,\big(\triangle-\beta^2 V_1-\beta V_2^-+\beta V_2^++\Re(\beta)a+1\big)f}_{L^2(\R^d)}\Big|,
	\end{equation*}
	where, as before, $a>0$ is chosen such that $|V_2^+|\leq a$. It is now easy to see that $\vertiii{f_n}_{\tilde{\E}_\beta}\to 0$ implies $\vertiii{f_n}_{\E_\sharp}\define\Braket{f_n,\big(-\sharp+1\big) f_n}\to 0$ for each $\sharp\in\{\triangle, -V_1,-V_2^-\}$. Hence, $\big(\tilde{\E}_\beta,\D(\tilde{\E}_\beta)\big)$ is closed.
	\end{proof}

We can now prove our second main result:
\begin{proof}[Proof of \cref{thm:poincare_unbounded}]
	Owing to \cite[Theorem XIII.47]{Reed1978}, for each $\beta>0$, the spectrum of $\tilde{L}_\beta$ (and hence $L_\beta$) is discrete and the ground state is non-degenerate. Moreover, it follows from \cref{lem:sectorial} that $(\tilde{L}_\beta)_{\beta\in\mathcal{S}}$ constitutes a holomorphic family of type (B). The theorem is therefore an immediate consequence of \cref{prop:derivative}. 
\end{proof}

It remains to establish \cref{thm:monotonicity}. To this end, we of course show that the right-hand side of \eqref{eq:bounded_derivative} is strictly positive (negative). This will be based on the following essentially well-known result:
\begin{proposition}\label{prop:sl_properties}
Let $\beta\geq 0$ and $U=(-a,a)$, $a>0$. Moreover assume that $H:U\to\R$ is piecewise $\C^1$. Consider the Sturm-Liouville problem
\begin{align*}
	f^{\prime\prime}(x)-\beta H^\prime(x)f^\prime(x)&=\lambda^\beta f(x),\\
	f^\prime(-a)=f^\prime(a)&=0,
\end{align*}
on $U$. Denote its $k^\text{th}$ eigenvalue and normalized eigenfunction by $\lambda_k^\beta$ and $\varphi_k^\beta$, respectively. Then the following hold true:
\begin{enumerate}
	\item\label{it:properties_1} The mapping $x\mapsto\varphi_k^\beta(x)$ is differentiable and
	\begin{equation}\label{eq:eigenfunction_derivative}
	(\varphi_k^\beta)^\prime(x)=\frac{\lambda_k^\beta}{e^{-\beta H(x)}}\int_{x}^{a}\varphi_k^\beta(t)e^{-\beta H(t)}\,dt
	\end{equation}
	for all $k\in\N_0$ and Lebesgue-a.e. $x\in U$.
	\item\label{it:properties_2} The eigenfunction $\varphi_1^\beta$ is strictly monotone and odd.
\end{enumerate}
\end{proposition}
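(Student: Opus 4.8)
The plan is to use that the Sturm--Liouville equation $f''-\beta H'f'=\lambda^\beta f$ turns into the divergence-form identity $(e^{-\beta H}f')'=\lambda^\beta e^{-\beta H}f$ after multiplication by the integrating factor $e^{-\beta H}$. For part~\ref{it:properties_1} I would first upgrade the eigenfunctions to genuine $\C^1$-solutions. Since $H$ is piecewise $\C^1$ it is Lipschitz on $\overline{U}$, hence $H\in W^{1,\infty}(U)$, so \cref{prop:neumann_advanced} applies and gives $\varphi_k^\beta\in H^1(U)$, which is continuous on $\overline{U}$ by the one-dimensional Sobolev embedding. The divergence-form identity then holds in the weak sense; its right-hand side $\lambda_k^\beta e^{-\beta H}\varphi_k^\beta$ is continuous, so $e^{-\beta H}(\varphi_k^\beta)'$ is a $\C^1$-function, and dividing by the continuous, and on the compact set $\overline{U}$ strictly positive, weight $e^{-\beta H}$ shows that $(\varphi_k^\beta)'$ is continuous --- in particular $\varphi_k^\beta$ is differentiable, with a classical second derivative away from the finitely many kinks $\mathcal{K}$. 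Integrating $(e^{-\beta H}f')'=\lambda_k^\beta e^{-\beta H}f$ from $x$ to $a$ and inserting the Neumann condition $(\varphi_k^\beta)'(a)=0$ then produces \eqref{eq:eigenfunction_derivative}.

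For part~\ref{it:properties_2} I would feed \eqref{eq:eigenfunction_derivative} into the oscillation structure of the spectrum. The ground state $\varphi_0^\beta$ is constant (eigenvalue $0$), so $\varphi_1^\beta$ belongs to the first non-zero and hence strictly negative eigenvalue $\lambda_1^\beta$; by classical Sturm--Liouville oscillation theory $\varphi_1^\beta$ has exactly one zero $x_0\in(-a,a)$, and this zero is simple because a non-trivial solution of the linear equation cannot vanish together with its derivative at any single point. Orthogonality of eigenfunctions in $L^2(U,\mu_\beta)$ gives $\int_{-a}^{a}\varphi_1^\beta(t)e^{-\beta H(t)}\,dt=0$, so the function $G(x):=\int_{x}^{a}\varphi_1^\beta(t)e^{-\beta H(t)}\,dt$ satisfies $G(-a)=G(a)=0$ while $G'=-\varphi_1^\beta e^{-\beta H}$ changes sign exactly at $x_0$; hence $G$ has a single interior extremum and is of one strict sign on all of $(-a,a)$. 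Substituting this into \eqref{eq:eigenfunction_derivative} and using $\lambda_1^\beta\neq 0$ shows that $(\varphi_1^\beta)'$ has constant sign on $(-a,a)$, i.e. $\varphi_1^\beta$ is strictly monotone.

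For oddness --- where one should additionally assume $H$ even, which is the setting of \cref{thm:monotonicity} --- I would argue by symmetry. Evenness of $H$ makes $H'$ odd, so the reflection $Rf(x):=f(-x)$ maps solutions of $f''-\beta H'f'=\lambda f$ to solutions of the same equation and preserves the Neumann conditions; thus $R\varphi_1^\beta$ is again a normalized eigenfunction for $\lambda_1^\beta$. Since every eigenvalue of this Sturm--Liouville problem is simple, $R\varphi_1^\beta=\pm\varphi_1^\beta$, and the $+$ sign is impossible: it would make $\varphi_1^\beta$ an even $\C^1$-function, forcing $(\varphi_1^\beta)'(0)=0$ and contradicting the strict monotonicity just proved. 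Hence $R\varphi_1^\beta=-\varphi_1^\beta$, i.e. $\varphi_1^\beta$ is odd. The main obstacle in carrying this out is the regularity bookkeeping in the merely piecewise-$\C^1$ setting --- checking that the integrating-factor identity genuinely promotes the $H^1$-eigenfunctions to $\C^1$ while simultaneously delivering \eqref{eq:eigenfunction_derivative} --- together with quoting the oscillation and simplicity statements of Sturm--Liouville theory in the required non-smooth-coefficient generality; granting these, the monotonicity and oddness deductions are short.
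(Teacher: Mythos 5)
Your argument is correct and follows exactly the route the paper gestures at: the paper's entire ``proof'' is the remark that \eqref{eq:eigenfunction_derivative} follows immediately from the absolute continuity of $H^1$ functions on an interval (i.e.\ precisely your integrating-factor identity $(e^{-\beta H}f')'=\lambda e^{-\beta H}f$ integrated from $x$ to $a$ with the Neumann condition) and that item (ii) is ``an easy consequence,'' so your write-up simply supplies the regularity bookkeeping, the orthogonality/oscillation argument for monotonicity, and the reflection argument for oddness that the paper leaves implicit. Two minor observations, both to your credit rather than against you: the integration actually produces \eqref{eq:eigenfunction_derivative} with an additional minus sign (a harmless sign slip that appears to originate in the paper's own statement, since $\lambda_k^\beta\leq 0$ for the problem as written), and you are right that the oddness claim in (ii) requires the extra hypothesis that $H$ is even, which the proposition omits but which is satisfied wherever it is invoked, namely in the proof of \cref{thm:monotonicity}.
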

The representation \eqref{eq:eigenfunction_derivative} follows immediately from the fact that $H^1(\mu_\beta,U)$ consists precisely of all absolutely continuous functions. Item \ref{it:properties_2} is then an easy consequence. The interested reader may also consult \cite{Mazya2008,Miclo2008,Roustant2017} for similar results.

\begin{proof}[Proof of \cref{thm:monotonicity}]
It is enough to assume $H^\prime\geq 0$ on $(0,a)\cap\mathcal{K}^c$. The other case then follows by considering $-H$.

It follows from \cref{thm:poincare_compact} and \cref{prop:sl_properties} \ref{it:properties_1} that
\begin{equation*}
	\partial_\beta\alpha_\beta=-\frac{\alpha_\beta^2}{Z_\beta}\int_{-a}^{a}\varphi_1^\beta(x)H^\prime(x)\left(\int_x^a\varphi_1^\beta(t)e^{-\beta H(t)}\,dt\right)\,dx.
\end{equation*}
We now claim that 
\begin{equation}\label{eq:claim}
	\int_{-a}^{a}f(x)H^\prime(x)\left(\int_x^af(t)e^{-\beta H(t)}\,dt\right)\,dx>0
\end{equation}
for all continuous, strictly monotone, and odd functions $f:U\to\R$. Upon establishing this claim, we can deduce the theorem thanks to \cref{prop:sl_properties} \ref{it:properties_2}.

To prove \eqref{eq:claim}, we first observe that there is no loss of generality in assuming $f$ to be strictly increasing (consider $-f$ otherwise). We further notice that, since $H$ is not constant, there is an $\varepsilon>0$ and an $x_0\in (0,a)$ such that $H^\prime(x)>0$ for all $x\in (x_0-\varepsilon,x_0+\varepsilon)\cap\mathcal{K}^c$. Finally, elementary manipulations exploiting symmetry properties of the integrands show that the left-hand side of \eqref{eq:claim} becomes
\begin{equation*}
		2\int_{0}^{a}f(x)H^\prime(x)\left(\int_x^af(t)e^{-\beta H(t)}\,dt\right)\,dx,
\end{equation*}
which is strictly positive by the observations above.
\end{proof}

\section{Application: The Poincar\'e Constant of the $O(2)$ Model}\label{sec:o_n}

Let $\Lambda$ be a finite set of nodes and let $M=(m_{i,j})_{i,j\in\Lambda}$ be a positive definite matrix with operator norm $\|M\|<c$ for some $c<2$. We further denote the unit circle in $\R^2$ by $S^1$. The $O(2)$ model is the probability measure $\nu$ with density
\begin{equation}\label{eq:o_n_model}
	\nu(d\xi)\propto e^{-\frac{1}{2}q_M(\xi)}d\xi, \qquad \xi\in(S^1)^{\Lambda},
\end{equation}
where the quadratic form $q_M$ acts as
\begin{equation*}
	q_M(\xi)=\sum_{i,j\in\Lambda}m_{i,j}\braket{\xi_i,\xi_j}.
\end{equation*}
In recent work Bauerschmidt and Bodineau proved the following result \cite{Bauerschmidt2019}:
\begin{proposition}\label{prop:bauerschmidt}
Let $M\in\R^{\Lambda\times\Lambda}$ be positive definite and assume $\|M\|<2$. Then the $O(2)$ model \eqref{eq:o_n_model} satisfies a Poincar\'e inequality uniformly with respect to the set $\Lambda$. More precisely, there is an $\eta>0$ independent of $\Lambda$ such that
\begin{equation}\label{eq:poincare_bauerschmidt}
	\var_{\nu}(f)\leq\eta\left(1+\frac{4\|M\|}{2-\|M\|}\right) \sum_{i\in\Lambda}\int |\nabla_{\xi_i} f|^2\,d\nu
\end{equation}
for all $f\in H^1_\nu\big((S^1)^\Lambda\big)$. Here, $|\nabla_{\sigma_i}f|$ denotes the length of the gradient of $f$ with respect to the $i^\textup{th}$ argument, both taken in the Riemannian sense.
\end{proposition}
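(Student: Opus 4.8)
The plan is to realise $\nu$ as a weak Gaussian decoration of the uniform product measure on $(S^1)^\Lambda$ and to transport a dimension-free Poincar\'e inequality along a renormalisation flow, following \cite{Bauerschmidt2019}. The first step removes the diagonal of $M$: since $|\xi_i|=1$ for every $i\in\Lambda$, for any $c\in(\|M\|,2)$ one has $q_M(\xi)=c|\Lambda|-\braket{\xi,B\xi}$ with $B\define cI-M$ acting block-diagonally on $(\R^2)^\Lambda$, hence $e^{-\frac12 q_M(\xi)}\propto e^{\frac12\braket{\xi,B\xi}}$; letting $c\downarrow\|M\|$ produces a positive definite $B$ with $\|B\|<\|M\|<2$. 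A Hubbard--Stratonovich transformation then writes
\begin{equation*}
	\nu(d\xi)\propto\Bigl(\int_{(\R^2)^\Lambda}e^{\braket{\phi,\xi}}\,\gamma_B(d\phi)\Bigr)\nu_0(d\xi),
\end{equation*}
with $\gamma_B=N(0,B)$ and $\nu_0$ the uniform law on $(S^1)^\Lambda$; that is, $\nu$ is $\nu_0$ coupled linearly to a Gaussian field of covariance $B$.

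Next I would split $B=\int_0^\infty\dot C_t\,dt$ with $\dot C_t\succeq 0$ and consider the interpolating family $\nu_t(d\xi)\propto e^{\frac12\braket{\xi,C_t\xi}}\nu_0(d\xi)$, running from $\nu_0$ at $t=0$ to $\nu$ as $C_t\to B$. The base estimate is that $\nu_0$ satisfies a Poincar\'e inequality with constant $1$, uniformly in $\Lambda$ -- the spectral gap of the circle Laplacian, preserved under tensorisation. Attached to the flow is a martingale/semigroup $(P_t)$, namely conditional expectation over the fluctuations above scale $t$ and governed by a Polchinski-type transport equation, which telescopes $\var_\nu(f)$ into the scale integral $\int_0^\infty\mathbb{E}[\braket{\nabla P_t f,\dot C_t\,\nabla P_t f}]\,dt$ plus a single-site remainder at the finest scale. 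Each piece I would bound by the Dirichlet energy $\sum_{i\in\Lambda}\int|\nabla_{\xi_i}f|^2\,d\nu$: the remainder through the Poincar\'e inequality of a bounded tilt of the circle measure (stable under Holley--Stroock, since the tilting potential has bounded gradient on the compact $S^1$), and the scale integral through a gradient contraction $\|\nabla P_t f\|\lesssim\|\nabla f\|$ closed by a Gronwall argument.

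The heart of the matter -- and the step I expect to be the main obstacle -- is keeping all constants \emph{independent of $|\Lambda|$}. A crude Holley--Stroock perturbation of $\nu_0$ by $e^{\frac12\braket{\xi,B\xi}}$ would cost a factor exponential in $\sup q_M$ and hence diverge in the thermodynamic limit; the renormalisation flow is precisely the mechanism that renders the per-scale errors additive rather than multiplicative, and their sum converges only under the high-temperature hypothesis $\|M\|<2$. Quantitatively, the effective susceptibility of the model is $(I-\tfrac12 B)^{-1}$ -- bounded exactly because the uniform law on $S^1\subset\R^2$ has second moment $\tfrac12 I_2$ -- and it is this operator that produces the factor $1+\frac{4\|M\|}{2-\|M\|}$ together with an absolute constant $\eta$, so that the dependence on $\Lambda$ enters only through $\|M\|$. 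As a sanity check the factor tends to $1$ as $\|M\|\to 0$, recovering the constant of $\nu_0$, and blows up as $\|M\|\to 2$, the threshold past which $\nu$ is expected to develop long-range order.
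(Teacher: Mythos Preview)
Your Hubbard--Stratonovich decomposition is correct and, after the rescaling $\phi\mapsto c\phi$, coincides with the convolution splitting $M^{-1}=c^{-1}\id+B^{-1}$ that the paper uses. The divergence is in what you do next. The paper does not prove \cref{prop:bauerschmidt} separately---it is quoted from \cite{Bauerschmidt2019}---but the argument is reproduced verbatim in the proof of \cref{cor:improved_poincare}, and it is a \emph{single} decomposition step, not a continuous flow. The disintegration $\nu=\int\bar\nu_\phi\,\bar\nu_r(d\phi)$ has $\bar\nu_\phi=\bigotimes_i\bar\nu_{\phi_i}$ a product of von~Mises measures on $S^1$ and $\bar\nu_r$ a log-concave measure on $(\R^2)^\Lambda$; one then applies the law of total variance. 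The conditional piece is bounded by a uniform single-spin Poincar\'e constant $\eta$ for $\bar\nu_{\phi_i}$, the marginal piece by Bakry--\'Emery for $\bar\nu_r$ (with constant $(c-\tfrac{c^2}{2})^{-1}$, invoking \cite{Dyson1978} for the convexity of the renormalised potential) combined with the elementary covariance estimate $|\partial_{\phi_i}\bar\nu_\phi(f)|^2\leq 2c^2\eta\,\bar\nu_\phi(|\nabla_{\xi_i}f|^2)$. This produces $\eta\bigl(1+\tfrac{4c}{2-c}\bigr)$ in two lines, and $c\downarrow\|M\|$ finishes.

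The continuous Polchinski flow you outline is the heavier machinery \cite{Bauerschmidt2019} deploy for the \emph{log-Sobolev} inequality, where no law-of-total-variance shortcut exists. It would also yield Poincar\'e, but your sketch leaves opaque exactly the steps where the constant is manufactured: the precise telescoping identity for $\var_\nu(f)$ along the flow, the commutation of $\nabla_\xi$ with the fluctuation semigroup $P_t$, and how the Gronwall closure converts the susceptibility $(I-\tfrac12 B)^{-1}$ into the stated factor $1+\tfrac{4\|M\|}{2-\|M\|}$ rather than a generic exponential bound. The two-block route buys you the explicit constant essentially for free; the flow buys generality (log-Sobolev, more complicated single-site spaces) that is not needed here. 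One small correction: as $c\downarrow\|M\|$ your $B=cI-M$ becomes only positive \emph{semi}definite, so you must keep $c>\|M\|$ strict throughout and pass to the limit only in the final inequality, as the paper does.
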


\Cref{thm:monotonicity} allows us to strengthen \cref{prop:bauerschmidt} by improving the value of the Poincar\'e constant:
\begin{corollary}\label{cor:improved_poincare}
We can choose $\eta=4$ in \eqref{eq:poincare_bauerschmidt}:
\begin{equation}\label{eq:var_bound_to_prove}
	\var_{\nu}(f)\leq4\left(1+\frac{4\|M\|}{2-\|M\|}\right) \sum_{i\in\Lambda}\int |\nabla_{\xi_i} f|^2\,d\nu.
\end{equation}
\end{corollary}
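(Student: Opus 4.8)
The plan is to feed the monotonicity result of \cref{thm:monotonicity} into the single-site analysis underlying \cref{prop:bauerschmidt}. The proof of \cref{prop:bauerschmidt} in \cite{Bauerschmidt2019} proceeds by conditioning on all spins but one: for $k\in\Lambda$, given $(\xi_j)_{j\neq k}$, the conditional law of $\xi_k\in S^1$ is the Gibbs measure $\propto e^{-\braket{\xi_k,h_k}}\,d\xi_k$ with effective field $h_k\define\sum_{j\neq k}m_{k,j}\xi_j$ (using symmetry of $M$). Inspecting that argument, the constant $\eta$ in \eqref{eq:poincare_bauerschmidt} may be taken to be any number such that every such single-site conditional measure obeys a Poincar\'e inequality with constant $\eta$, \emph{uniformly} in the field $h_k$. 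Choosing polar coordinates on $S^1$ with reference direction $-h_k/|h_k|$, the conditional measure becomes $\mu_\beta(d\phi)\propto e^{\beta\cos\phi}\,d\phi$ on $S^1$ with $\beta\define|h_k|\geq 0$, and the Riemannian gradient coincides with $\partial_\phi$ since $\phi$ is an arc-length parameter. Hence it suffices to prove that the Poincar\'e constant $\alpha_\beta^{S^1}$ of $\mu_\beta$ on $S^1$ is at most $4$ for every $\beta\geq 0$.

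Since the circle is not an interval of the form $(-a,a)$, \cref{thm:monotonicity} cannot be applied to $\mu_\beta$ directly, so I would first pass to the interval $U=(-\pi,\pi)$ carrying the measure $\bar\mu_\beta(d\phi)\propto e^{\beta\cos\phi}\,d\phi$ --- that is, $\mu_\beta$ with the single (null) point $\phi=\pi$ deleted --- together with the Neumann boundary condition. The space $H^1(S^1)$ is exactly the subspace of periodic functions inside $H^1(U,\bar\mu_\beta)=H^1(U)$, and for every such $f$ the variance and the Dirichlet energy are unchanged by this identification; by the Rayleigh-quotient characterization of the Poincar\'e constant recalled in \cref{sec:eigenvalue}, this yields $\alpha_\beta^{S^1}\leq\bar\alpha_\beta$, where $\bar\alpha_\beta$ denotes the Poincar\'e constant of $\bar\mu_\beta$ on $U$.

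Now \cref{thm:monotonicity} applies to $\bar\alpha_\beta$ with Hamiltonian $H(\phi)=-\cos\phi$: this $H$ is non-constant, even, and $\C^\infty$ (hence piecewise $\C^1$ with empty kink set), and $H^\prime(\phi)=\sin\phi\geq 0$ on $(0,\pi)$. Therefore $\beta\mapsto\bar\alpha_\beta$ is strictly decreasing on $[0,\infty)$, so $\bar\alpha_\beta\leq\bar\alpha_0$ for all $\beta\geq 0$. The boundary value $\bar\alpha_0$ is the Poincar\'e constant of the uniform measure on an interval of length $2\pi$ with Neumann conditions, i.e. the reciprocal of the first non-zero Neumann eigenvalue of $-\partial_\phi^2$ on $(-\pi,\pi)$; this eigenvalue is $\tfrac14$, attained at $\phi\mapsto\sin(\phi/2)$, so $\bar\alpha_0=4$. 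Chaining the estimates gives $\eta=\sup_{\beta\geq 0}\alpha_\beta^{S^1}\leq\sup_{\beta\geq 0}\bar\alpha_\beta\leq\bar\alpha_0=4$, which is precisely \eqref{eq:var_bound_to_prove}.

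The only genuinely delicate point is the reduction in the first paragraph: one must return to \cite{Bauerschmidt2019} and check that the conditioning/martingale argument there does isolate the single-site Poincar\'e constant as the factor $\eta$, and that the conditional measures are exactly of the stated form with $\beta$ allowed to be any non-negative real. Everything afterwards is soft --- the inclusion $H^1(S^1)\subset H^1\big((-\pi,\pi)\big)$ together with \cref{thm:monotonicity} and a one-line eigenvalue computation --- and, notably, the passage to the symmetric interval $(-\pi,\pi)$ is exactly what makes the even potential $-\cos\phi$ satisfy the sign condition $H^\prime\geq 0$ on $(0,\pi)$ required by \cref{thm:monotonicity}.
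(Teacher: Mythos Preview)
Your second and third paragraphs are correct and match the paper exactly: pass from $S^1$ to the interval $(-\pi,\pi)$ via the inclusion $H^1(S^1)\subset H^1\big((-\pi,\pi)\big)$, apply \cref{thm:monotonicity} to $H(\phi)=-\cos\phi$, and read off $\bar\alpha_0=4$ from the first nonzero Neumann eigenvalue.

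The gap is in your first paragraph. The argument in \cite{Bauerschmidt2019} does \emph{not} proceed by conditioning $\xi_k$ on the remaining spins. It is a Gaussian decomposition: for $c\in(\|M\|,2)$ one writes $M^{-1}=c^{-1}\id+B^{-1}$ with $B$ positive definite, introduces an auxiliary Gaussian field $\phi\in(\R^2)^\Lambda$ with covariance $B$, and observes that conditionally on $\phi$ the spins are \emph{independent} with laws $\bar\nu_{\phi_i}(d\xi_i)\propto e^{c\braket{\phi_i,\xi_i}}d\xi_i$ on $S^1$. The constant $\eta$ is the uniform-in-$\phi_i$ Poincar\'e constant of this single-site measure (used twice: once for the inner variance and once in a covariance bound), while the factor $1+\tfrac{4\|M\|}{2-\|M\|}$ comes separately from a Bakry--\'Emery estimate on the $\phi$-marginal combined with the law of total variance. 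Your Glauber conditional $\propto e^{-\braket{\xi_k,h_k}}$ happens to be of the same von Mises form, so your bound on $\mu_\beta$ still plugs in; but a bare ``condition on all spins but one'' argument does not produce that specific prefactor, and your sketch of the reduction is not what you will find when you carry out the check you defer to the last paragraph.
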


\begin{proof}
The argument is for the most part similar to \cite{Bauerschmidt2019}. Nonetheless, we give the full proof for completeness. Details for some of the computations can be found in the work of Bauerschmidt and Bodineau.

Fix $c\in\big(\|M\|,2\big)$. Then we can write $M^{-1}=c^{-1}\id+B^{-1}$ for some positive definite matrix $B$. This immediately shows that
\begin{equation*}
	e^{-\frac12 q_M(\xi)}=C\int_{(\R^2)^\Lambda} e^{-\frac{c}{2}|\phi-\xi|^2} e^{-\frac12 q_B(\phi)}\,d\phi
\end{equation*}
for some numerical constant $C>0$. In particular, for any $f:(S^1)^\Lambda\to\R$, we can write
\begin{equation}\label{eq:o_n_fubini}
	\int_{(S^1)^\Lambda}f(\xi)\,\nu(d\xi)=\int_{(\R^2)^\Lambda} \left(\int_{(S^1)^\Lambda} f(\xi)\,\bar{\nu}_\phi(d\xi)\right)\,\bar{\nu}_r(d\phi),
\end{equation}
where
\begin{equation*}
	\bar{\nu}_r(d\phi)\propto e^{-\frac12 q_B(\phi)}\,d\phi,\qquad\phi\in(\R^2)^\Lambda
\end{equation*}
and $\bar{\nu}_\phi(d\xi)=\prod_{i\in\Lambda}\bar{\nu}_{\phi_i}(d\xi_i)$ for
\begin{equation*}
	\bar{\nu}_{\phi_i}(d\xi_i)\propto e^{-\frac{c}{2}|\phi_i-\xi_i|^2},\qquad\xi_i\in S^1.
\end{equation*}
For $\beta\geq 0$ define the Gibbs measure
\begin{equation}\label{eq:extended_single_spin}
	\mu_{\beta}(dx)=\frac{1}{Z_\beta} e^{\beta\cos t}dt, \quad t\in[-\pi,\pi].
\end{equation}
Note that the Poincar\'e constants of the measures $\bar{\nu}_{\phi_i}$ and $\mu_{c|\phi_i|}$ coincide. Applying \cref{thm:monotonicity} to $\mu_\beta$, we can therefore bound the Poincar\'e constant of $\bar{\nu}_{\phi_i}$ by the Poincar\'e constant of the Lebesgue measure $\mu_0$. It is well known that the latter is $\eta=4$. In particular, $\eta$ also provides an upper bound on the Poincar\'e constant of the measure $\bar{\nu}_\phi$ by the tensorization principle.

We can now prove \eqref{eq:var_bound_to_prove}. To this end, let $f\in H^1\big((S^1)^\Lambda\big)$ and abbreviate $\mu(f)\define\int f\,d\mu$. It follows from the law of total variance and \eqref{eq:o_n_fubini} that
\begin{equation}\label{eq:to_bound}
	\var_\nu(f)=\int_{(\R^2)^\Lambda}\var_{\bar{\nu}_\phi}\,\bar{\nu}_r(d\phi)+\var_{\bar{\nu}_r}\big(\bar\nu_{\phi}(f)\big).
\end{equation}
By the Poincar\'e inequality with constant $\eta$ for $\bar{\nu}_\phi$, we get
\begin{equation*}
	\int_{(\R^2)^\Lambda}\var_{\bar{\nu}_\phi}\,\bar{\nu}_r(d\phi)\leq\eta\sum_{i\in\Lambda}\int_{(S^1)^\Lambda} |\nabla_{\xi_i} f|^2\,d\nu.
\end{equation*}
To bound the second term in \eqref{eq:to_bound}, we first notice that, by \cite[Theorem D.2]{Dyson1978} and the Bakry-\'Emery criterion \cite{Bakry1983}, the measure $\bar\nu_r$ satisfies a Poincar\'e inequality with constant $\alpha=\big(c-\frac{c^2}{2}\big)^{-1}$. In particular, we get
\begin{equation}\label{eq:second_bound}
	\var_{\bar{\nu}_r}\big(\bar\nu_{\phi}(f)\big)\leq \alpha\sum_{i\in\Lambda} \bar\nu_r\big(|\partial_{\phi_i}\bar\nu_{\phi}(f)|^2\big).
\end{equation}
One can check that
\begin{equation*}
	\partial_{\phi_i}\bar\nu_{\phi}(f)=c\int_{(S^1)^\Lambda}\big(f(\xi)-\bar\nu_\phi(f)\big)\big(\xi_i-\bar\nu_\phi(\xi_i)\big)\,\bar\nu_\phi(d\xi)\define\cov_{\bar{\nu}_\phi}(f,\xi_i).
\end{equation*}
Since $\cov_{\bar{\nu}_\phi}(\cdot)=\bar{\nu}_\phi\big(\cov_{\phi_i}(\cdot)\big)$, it is enough to bound $\big|\cov_{\bar{\nu}_{\phi_i}}(f,\xi_i)\big|$. To this end, we estimate
\begin{align*}
 	\big|\cov_{\bar{\nu}_{\phi_i}}(f,\xi_i)\big|&\leq\sqrt{\var_{\bar\nu_{\phi_i}}(f)}\left(\frac12\int_{S^1\times S^1}\big|\xi_i-\tilde{\xi}_i\big|^2\,\bar{\nu}_{\phi_i}(d\xi_i)\,\bar{\nu}_{\phi_i}(d\tilde\xi_i)\right)^{\frac12}\\
 	&\leq\sqrt{2\var_{\bar\nu_{\phi_i}}(f)},
\end{align*} 
where the second inequality uses that $|\xi_i-\tilde\xi_i|\leq 2$. Applying once more the Poincar\'e inequality for $\bar\nu_{\phi_i}$, we have shown that
\begin{equation*}
	\big|\partial_{\phi_i}\bar\nu_{\phi}(f)\big|^2\leq 2c^2\bar\nu_\phi\left(\sqrt{\var_{\bar\nu_{\phi_i}}(f)}\right)^2\leq 2c^2\bar\nu_\phi\big(\var_{\bar\nu_{\phi_i}}(f)\big)\leq 2c^2\eta \bar\nu_\phi\big(|\nabla_{\xi_i} f|^2\big).
\end{equation*}
We plug this back into \eqref{eq:second_bound} and finally find from \eqref{eq:to_bound} and another application of \eqref{eq:o_n_fubini}
\begin{equation*}
	\var_\nu(f)\leq\eta\big(1+2c^2\alpha\big)\sum_{i\in\Lambda}\int_{(S^1)^\Lambda} |\nabla_{\xi_i} f|^2\,d\nu=\eta\left(1+\frac{4c}{2-c}\right)\sum_{i\in\Lambda}\int_{(S^1)^\Lambda} |\nabla_{\xi_i} f|^2\,d\nu.
\end{equation*}
Recalling that $\eta=4$, the proof is concluded by letting $c\downarrow\|M\|$.
\end{proof}
The argument given by Bauerschmidt and Bodineau \cite{Bauerschmidt2019} actually applies to the Log-Sobolev inequality, too. Recall that we say that a measure $\mu$ on $U$ satisfies a \emph{Logarithmic Sobolev inequality} with constant $\varpi\geq 0$ if
\begin{equation}\label{eq:lsi}
	\ent_{\mu}(f^2)\define\int_U f^2\log\left(\frac{f^2}{\|f\|_{L^2(U,\mu)}^2}\right)\,d\mu\leq \varpi\int_U (f^\prime)^2\,d\mu
\end{equation}
for all $f\in H^1(U,\mu)$. The quantity $\ent_{\mu}(\cdot)$ is called the \emph{entropy} of the measure $\mu$ and we agree on the convention $\ent_{\mu}(0)=0$. Note that the inequality \eqref{eq:lsi} implies in particular $\ent_{\mu}(f^2)<\infty$ for all $f\in H^1(U,\mu)$. Again, we shall denote the optimal constant in \eqref{eq:lsi} by $\varpi$ in the sequel. It turns out that the Logarithmic Sobolev inequality is stronger than the Poincar\'e inequality \eqref{eq:poincare} \cite{Deuschel1989,Rothaus1980, Rothaus1981}.

The main obstacle in transferring \cref{cor:improved_poincare} to the Log-Sobolev constant is the fact that \cref{thm:monotonicity} cannot hold in the same generality, see \cref{ex:monotonicity_lsi} below. Of course, we do not really need a result as strong as \cref{thm:monotonicity} in order to derive \cref{cor:improved_poincare} for the Log-Sobolev inequality but only wish to show monotonicity of the Log-Sobolev constant of the single-spin measure $\mu_\beta$ in \eqref{eq:extended_single_spin}. Unfortunately, this currently seems to be out of reach.

\begin{example}\label{ex:monotonicity_lsi}
Let $U=(-1,1)$. Given a cutoff $\gamma\in(0,1)$, we consider the Hamiltonian
\begin{equation*}
	H_\gamma(x)= |x|\wedge\gamma.
\end{equation*}
Then the associated Gibbs measure clearly falls in the regime of \cref{thm:monotonicity} and the Poincar\'e constant is thus strictly monotone decreasing in the inverse temperature $\beta$. Defining the functions
\begin{align*}
	f_\beta(x)&\define\int_{x}^{1}e^{-\beta H_\gamma(t)}\,dt=e^{-\beta\gamma}(1-(x\vee\gamma))+\frac{e^{-\beta(x\wedge\gamma)}-e^{-\beta\gamma}}{\beta},\\
	g_\beta(x)&\define\int_{0}^{x}e^{\beta H_\gamma(t)}\,dt=e^{\beta\gamma}(x-(x\wedge\gamma))+\frac{e^{\beta(x\wedge\gamma)}-1}{\beta}
\end{align*}
for $x\geq 0$, we see that the quantities \eqref{eq:lower_lsi} and \eqref{eq:upper_lsi} below become
\begin{align*}
	b_\beta&=\sup_{0\leq x\leq 1}f_\beta(x)\log\left(1+\frac{f_\beta(0)}{f_\beta(x)}\right)g_\beta(x),\\
	B_\beta&=\sup_{0\leq x\leq 1}f_\beta(x)\log\left(1+\frac{2e^2f_\beta(0)}{f_\beta(x)}\right)g_\beta(x).
\end{align*}
Evaluating these two expressions numerically, we find the plot depicted in \cref{fig:bounds_lsi}. Thus appealing to \cref{prop:muckenhoupt_lsi}, we can conclude that the Log-Sobolev constant of this measure is not monotone. 
\begin{figure}[H]
	\centering
	\includegraphics[width=0.8\textwidth]{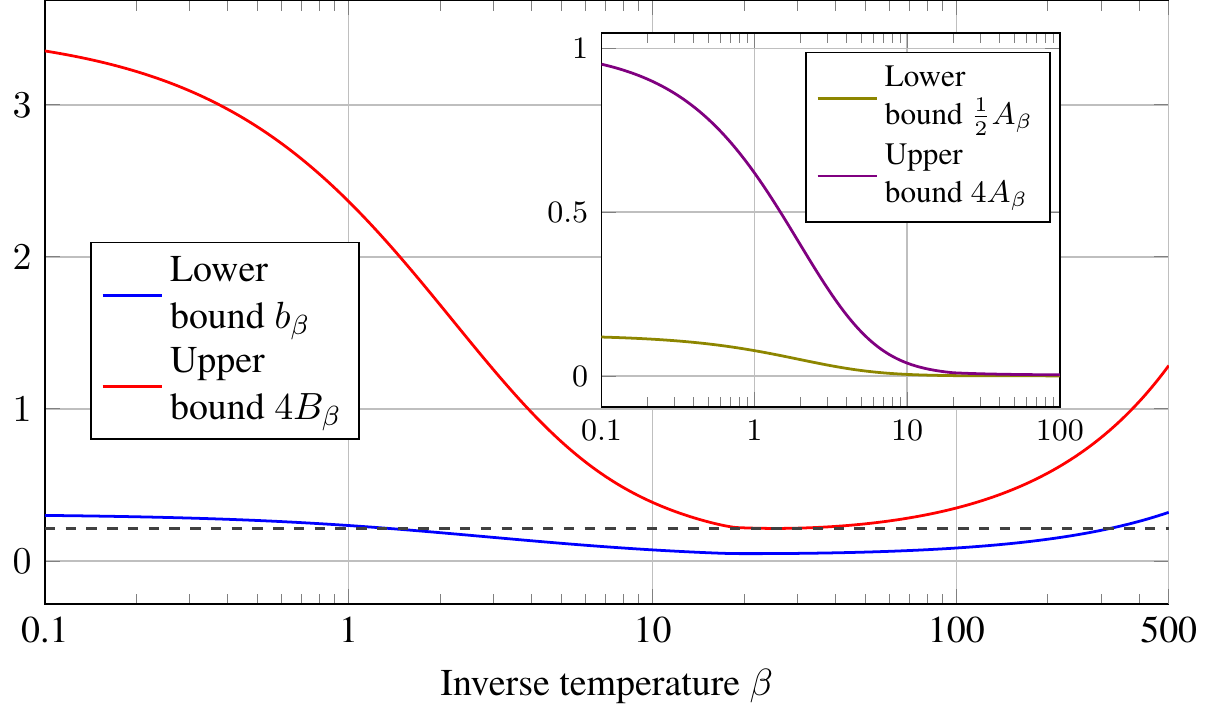}
	\caption{Plot of the lower and upper bounds on the Log-Sobolev constant of the measure $d\mu_\beta^\gamma(x)\propto e^{-\beta (|x|\wedge\gamma)}dx$ for $\gamma=.95$. As indicated by the dashed horizontal line, the constant can not be monotone in $\beta$. The inset shows the respective bounds for the Poincar\'e constant provided by \cref{prop:muckenhoupt_poincare}. Notice that the $x$-axes admit logarithmic scaling.}
	\label{fig:bounds_lsi}
\end{figure}
\end{example}

\section{Saturation of the Log-Sobolev Constant}\label{sec:saturation}

In this final section we show that the measure $\mu_\beta$ defined in \eqref{eq:extended_single_spin} exhibits a noteworthy property: The Log-Sobolev constant saturates at a strictly positive value in the zero-temperature limit whereas the Poincar\'e constant vanishes:
\begin{theorem}\label{thm:saturation}
	Let $\alpha_\beta$ and $\varpi_\beta$ denote the the Poincar\'e and Log-Sobolev constants of the Gibbs measure 
	\begin{equation*}
		\mu_\beta(dx)\propto e^{\beta\cos x}\1_{[-\pi,\pi]}(x)\,dx.
	\end{equation*}
	 Then there are constants $0<c_1<c_2$ independent of $\beta$ such that
	\begin{equation}\label{eq:lsi_bounds}
	c_1\leq \varpi_\beta\leq c_2
	\end{equation}
	for all $\beta\geq 0$. Similarly, there exists a uniform constant $c_3>0$ such that
	\begin{equation*}
	\alpha_\beta\leq \frac{c_3}{\beta}
	\end{equation*}
	for all $\beta> 0$.
\end{theorem}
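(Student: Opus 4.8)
The plan is to handle the Poincar\'e and the logarithmic Sobolev constants separately, in both cases reducing to the one--dimensional Muckenhoupt--type criteria: for the Poincar\'e inequality \cref{prop:muckenhoupt_poincare}, and for the log--Sobolev inequality the Bobkov--G\"otze variant \cref{prop:muckenhoupt_lsi}. Throughout write $H(x)=-\cos x$, $A(x)=\int_x^\pi e^{\beta\cos t}\,dt$ and $C(x)=\int_0^x e^{-\beta\cos s}\,ds$. Since $H$ is even with median $0$ and the partition function cancels, the criteria become, up to universal constants, $\alpha_\beta\asymp\sup_{0<x<\pi}A(x)C(x)$ and $\varpi_\beta\asymp\sup_{0<x<\pi}A(x)C(x)\log\!\bigl(1+c\,Z_\beta/A(x)\bigr)$, with $A(0)=Z_\beta/2$ playing the role of the total mass of a tail.

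For the Poincar\'e bound I would first observe that $H$ is non--constant, even, smooth, with $H'(x)=\sin x\geq 0$ on $(0,\pi)$, so \cref{thm:monotonicity} yields that $\alpha_\beta$ is strictly decreasing; in particular $\alpha_\beta\leq\alpha_0=4$, the Neumann--Poincar\'e constant of the uniform measure on $(-\pi,\pi)$, which already gives the claim for $\beta$ in a bounded range. For large $\beta$ I would estimate $\sup_x A(x)C(x)$ by splitting $(0,\pi)$ into the bulk $(0,\pi/2]$, an intermediate band $[\pi/2,\pi-\beta^{-1/2}]$, and the boundary layer $[\pi-\beta^{-1/2},\pi)$. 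On each piece the elementary two--sided estimate $\tfrac{2}{\pi^2}\theta^2\leq 1-\cos\theta\leq\tfrac12\theta^2$, chord/tangent bounds for $\cos$, monotonicity of the integrands, and the Laplace bound $Z_\beta\leq\pi^{3/2}(2\beta)^{-1/2}e^\beta$ combine to give $A(x)C(x)\leq C_\ast/\beta$. Hence $\alpha_\beta\leq c_3/\beta$.

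The log--Sobolev upper bound $\varpi_\beta\leq c_2$ then follows by feeding $A(x)C(x)\leq C_\ast/\beta$ into the Bobkov--G\"otze quantity together with the crude estimates $A(x)\geq(\pi-x)e^{-\beta}$ and $\log Z_\beta\leq\beta+O(1)$, which bound the logarithmic factor by $2\beta+O(1)+\log\tfrac{1}{\pi-x}$; the logarithmic term in the boundary layer is absorbed using $v\log(1/v)\leq 1/e$, and one is left with $\sup_x A(x)C(x)\log(1+\cdots)\leq 2C_\ast+o(1)$. For the lower bound on a bounded range $[0,\beta_0]$ one may simply invoke Holley--Stroock: the density relative to the uniform measure has oscillation $\leq 2\beta_0$, so $\varpi_\beta\geq e^{-2\beta_0}\varpi_0>0$ there.

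The heart of the matter -- and the source of the saturation -- is the log--Sobolev lower bound for large $\beta$. The idea is that the extremising region for the Bobkov--G\"otze quantity is \emph{not} the bulk but the $O(\beta^{-1/2})$ neighbourhood of the reflecting endpoint $x=\pi$, which is a local maximum of $H$. Substituting $x=\pi-\beta^{-1/2}t$ and using $\cos(\pi-u)=-1+\tfrac12u^2+O(u^4)$, a Laplace expansion gives $A(x)\sim\beta^{-1/2}e^{-\beta}\int_0^t e^{w^2/2}dw$, $C(x)\sim\beta^{-1/2}e^{\beta}\int_t^\infty e^{-w^2/2}dw$ and $\log(1+c\,Z_\beta/A(x))\sim 2\beta$, so the Bobkov--G\"otze quantity at $x$ is $\gtrsim 2\bigl(\int_0^t e^{w^2/2}dw\bigr)\bigl(\int_t^\infty e^{-w^2/2}dw\bigr)+o(1)$. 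The function $t\mapsto(\int_0^t e^{w^2/2}dw)(\int_t^\infty e^{-w^2/2}dw)$ vanishes at $t=0$ and decays like $t^{-2}$ at infinity, hence attains a strictly positive interior maximum; choosing that optimal $t$ gives $\varpi_\beta\geq c_1>0$ for $\beta\geq\beta_0$, and continuity (again via the Muckenhoupt quantities) together with positivity of $\varpi_\beta$ for each fixed $\beta$ closes the gap on $[0,\beta_0]$. The delicate points will be making the Laplace asymptotics uniform for $t$ in a compact set and checking that the same expansions also supply the matching upper estimate, consistently with $\varpi_\beta\leq c_2$.
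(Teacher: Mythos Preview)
Your proposal is correct and follows essentially the same route as the paper: both reduce to the Muckenhoupt/Bobkov--G\"otze criteria, and the key observation for the log--Sobolev lower bound is precisely that the extremising $x$ lies in an $O(\beta^{-1/2})$ neighbourhood of $\pi$, where the product $\bigl(\int_0^t e^{w^2/2}\,dw\bigr)\bigl(\int_t^\infty e^{-w^2/2}\,dw\bigr)$ emerges and is bounded away from zero. The only substantive difference is presentational: where you invoke Laplace asymptotics (and Holley--Stroock / \cref{thm:monotonicity} as shortcuts for bounded $\beta$), the paper replaces the asymptotics by explicit non-asymptotic substitution bounds (its \cref{lem:technical_results}), which spares you the uniformity issues you flag at the end.
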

We give the proof of \cref{thm:saturation} in \cref{sec:proof_saturation}. It relies on the bounds presented in the following section.

\subsection{Muckenhoupt's Bounds}

The following bounds on the Poincar\'e constant follow from the work of Muckenhoupt \cite{Muckenhoupt1972}, see e.g. \cite{Ane2000,Bobkov1999}. The version we state below features improved numerical factors and was obtained by Miclo \cite{Miclo2008}, see also \cite{Mazya2008}.
\begin{proposition}\label{prop:muckenhoupt_poincare}
	Let $U=[-a,a]$, $a>0$, be a symmetric interval and assume that the function $H:U\to\R$ in the definition of the Gibbs measure $\mu_\beta$ is even. Then the sharp constant $\alpha_\beta$ in \eqref{eq:poincare} satisfies
	\begin{equation*}
		A_\beta\leq \alpha_\beta\leq 4 A_\beta,
	\end{equation*}
	where 
	\begin{equation*}
		A_\beta=\sup_{0\leq x\leq a}\left(\int_{x}^{a}e^{-\beta H(t)}\,dt\right)\left(\int_0^x e^{\beta H(t)}\,dt\right).
	\end{equation*}
\end{proposition}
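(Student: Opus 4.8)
The plan is to deduce the proposition from the classical one-dimensional characterization of the Poincar\'e constant through a Hardy-type functional, going back to Muckenhoupt \cite{Muckenhoupt1972} and sharpened to the numerical constant stated here by Miclo \cite{Miclo2008} (see also \cite{Mazya2008}). That result reads as follows: if $\nu$ is a probability measure on an interval $I=(c,d)\subset\R$ with Lebesgue density $\rho$ and $m\in I$ is a median of $\nu$, then the optimal constant $\alpha$ in the Poincar\'e inequality $\var_\nu(f)\leq\alpha\int_I(f')^2\,d\nu$ satisfies $B\leq\alpha\leq 4B$, where $B=\max(B_+,B_-)$ with
\begin{equation*}
	B_+=\sup_{m<x<d}\nu([x,d))\int_m^x\frac{dt}{\rho(t)},\qquad B_-=\sup_{c<x<m}\nu((c,x])\int_x^m\frac{dt}{\rho(t)}.
\end{equation*}
(Both sides are read in $[0,\infty]$, with $\alpha=\infty$ exactly when $B=\infty$, i.e.\ when no Poincar\'e inequality holds.)

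It then remains to evaluate $B$ for $\nu=\mu_\beta$ on $I=(-a,a)$, where $\rho(t)=Z_\beta^{-1}e^{-\beta H(t)}$ and hence $1/\rho(t)=Z_\beta e^{\beta H(t)}$. Since $H$ is even, so is $\rho$, whence $\mu_\beta$ is invariant under $t\mapsto-t$ and $m=0$ is a median. With this choice,
\begin{equation*}
	B_+=\sup_{0<x<a}\left(\frac{1}{Z_\beta}\int_x^a e^{-\beta H(t)}\,dt\right)\left(Z_\beta\int_0^x e^{\beta H(t)}\,dt\right)=\sup_{0<x<a}\left(\int_x^a e^{-\beta H(t)}\,dt\right)\left(\int_0^x e^{\beta H(t)}\,dt\right),
\end{equation*}
so the normalization $Z_\beta$ cancels and $B_+=A_\beta$. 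Substituting $t\mapsto-t$ and using again that $H$ is even yields $B_-=B_+=A_\beta$, hence $B=A_\beta$, and $B\leq\alpha_\beta\leq 4B$ is precisely the claim.

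The only substantial ingredient is the Muckenhoupt--Miclo estimate itself, which I would invoke as a black box; the remainder is the symmetry bookkeeping above, where I expect no real difficulty. For orientation, a self-contained proof would obtain the lower bound $A_\beta\leq\alpha_\beta$ by testing \eqref{eq:poincare} against an odd function built from the truncated primitive $x\mapsto\int_0^{x\wedge x_\varepsilon}e^{\beta H(t)}\,dt$, with $x_\varepsilon$ a near-maximizer in the definition of $A_\beta$, and the upper bound $\alpha_\beta\leq 4A_\beta$ by bounding $\var_{\mu_\beta}(f)\leq\int(f-f(0))^2\,d\mu_\beta$, splitting the integral at the median $0$, and applying on each of $(0,a)$ and $(-a,0)$ the one-sided (half-line) Hardy inequality, whose optimal constant is at most $4$ times the corresponding one-sided Muckenhoupt functional; evenness of $H$ makes the two half-interval contributions coincide.
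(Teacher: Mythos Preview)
Your proposal is correct and matches the paper's treatment: the paper does not prove this proposition either but cites it as a known result due to Muckenhoupt with the sharp numerical factor $4$ coming from Miclo (and Mazya). Your reduction of the general median-based Muckenhoupt--Miclo criterion to the symmetric case via $m=0$ and $B_+=B_-=A_\beta$ is exactly the bookkeeping needed to obtain the stated form, and the optional sketch you give for a self-contained argument is also standard and accurate.
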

There is an analogous result for the Log-Sobolev constant, which is originally due to Bobkov and G\"otze \cite{Bobkov1999}. We state the criterion in a sharpened version by Barthe and Roberto \cite{Barthe2003}.
\begin{proposition}\label{prop:muckenhoupt_lsi}
	Let $U=[-a,a]$, $a>0$, be a symmetric interval and assume that the function $H:U\to\R$ in the definition of the Gibbs measure $\mu_\beta$ is even. Then the sharp constant $\varpi_\beta$ in \eqref{eq:poincare} satisfies
	\begin{equation*}
    b_\beta\leq \varpi_\beta\leq 4B_\beta,
	\end{equation*}
	where 
	\begin{align}
	b_\beta&=\sup_{0\leq x\leq a}\left(\int_x^ae^{-\beta H(t)}\,dt\right)\log\left(1+\frac{Z_\beta}{2\int_x^ae^{-\beta H(t)}\,dt}\right)\left( \int_0^x e^{\beta H(t)}\,dt\right),\label{eq:lower_lsi}\\
	B_\beta&=\sup_{0\leq x\leq a}\left(\int_x^ae^{-\beta H(t)}\,dt\right)\log\left(1+\frac{e^2 Z_\beta}{\int_x^ae^{-\beta H(t)}\,dt}\right)\left( \int_0^x e^{\beta H(t)}\,dt\right)\label{eq:upper_lsi}
	\end{align}
	with the convention $0\cdot\infty=\infty\cdot 0=0$. Furthermore, we have $B_\beta\leq 4b_\beta$ and therefore $b_\beta\leq \varpi_\beta\leq 16 b_\beta$.
\end{proposition}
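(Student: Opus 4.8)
This proposition is essentially a specialization of the Hardy-type characterization of the one-dimensional Log-Sobolev constant to the symmetric setting, so the plan is to reduce it to the Bobkov--G\"otze criterion \cite{Bobkov1999} in the refined form of Barthe and Roberto \cite{Barthe2003}. I would start from the following statement of the latter: for a probability measure $\mu$ on a bounded interval with continuous, strictly positive density $p$ and median $m$, the optimal constant $\varpi$ in \eqref{eq:lsi} satisfies
\begin{equation*}
	b_+\vee b_-\leq\varpi\leq 4\,(B_+\vee B_-),
\end{equation*}
where, with $I_+=[x,a]$ for $x>m$ (and symmetrically $I_-=[-a,x]$ for $x<m$) and the Hardy weight integrated from $m$ to $x$,
\begin{equation*}
	b_\pm=\sup\,\mu(I_\pm)\log\Big(1+\frac{1}{2\mu(I_\pm)}\Big)\int_m^x\frac{dt}{p(t)},\qquad
	B_\pm=\sup\,\mu(I_\pm)\log\Big(1+\frac{e^2}{\mu(I_\pm)}\Big)\int_m^x\frac{dt}{p(t)}.
\end{equation*}

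Since $H$ is even, the measure $\mu_\beta$ is symmetric about the origin, so $m=0$ is a median; hence $b_+=b_-$, $B_+=B_-$, and all suprema run over $x\in[0,a]$. Substituting the explicit density $p(t)=Z_\beta^{-1}e^{-\beta H(t)}$ gives $\int_0^x p(t)^{-1}\,dt=Z_\beta\int_0^x e^{\beta H(t)}\,dt$ and $\mu_\beta([x,a])=Z_\beta^{-1}\int_x^a e^{-\beta H(t)}\,dt$, and the normalizing factors $Z_\beta$ cancel in every product $\mu_\beta(I_+)\int_0^x p(t)^{-1}\,dt$. Thus $b_+$ and $B_+$ become precisely the quantities $b_\beta$ and $B_\beta$ of \eqref{eq:lower_lsi}--\eqref{eq:upper_lsi}, with the convention $0\cdot\infty=\infty\cdot0=0$ arising from the endpoints $x=0$ and $x=a$. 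This yields $b_\beta\leq\varpi_\beta\leq 4B_\beta$.

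For the remaining comparison $B_\beta\leq4b_\beta$ I would prove the scalar inequality
\begin{equation*}
	\log\big(1+e^2 s\big)\leq 4\log\Big(1+\frac s2\Big)\qquad\text{for all } s\geq 2
\end{equation*}
and apply it with $s=Z_\beta\big(\int_x^a e^{-\beta H(t)}\,dt\big)^{-1}$; here the constraint $s\geq2$ is exactly $\int_x^a e^{-\beta H}\leq\int_0^a e^{-\beta H}=Z_\beta/2$ for $x\geq0$, which again uses evenness of $H$. Multiplying through by the non-negative factor $\mu_\beta([x,a])\int_0^x p(t)^{-1}\,dt$ and taking the supremum over $x\in[0,a]$ gives $B_\beta\leq4b_\beta$, and hence $b_\beta\leq\varpi_\beta\leq16b_\beta$. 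The displayed scalar inequality is the only genuine computation: the function $s\mapsto 4\log(1+s/2)-\log(1+e^2 s)$ is positive at $s=2$ and strictly increasing on $[2,\infty)$ (its unique critical point, at $s=(2e^2-4)/(3e^2)$, lies below $2$), so it follows from a one-line calculus argument. I expect the main thing needing care to be the faithful translation of the conventions of \cite{Bobkov1999,Barthe2003} --- the choice of median, the placement of the $+1$ and the constants $1/2$ and $e^2$ inside the logarithms, and the orientation of the Hardy integral --- into the precise form \eqref{eq:lower_lsi}--\eqref{eq:upper_lsi}; there is no serious analytic obstacle beyond this bookkeeping.
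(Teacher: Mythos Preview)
Your proposal is correct, and in fact gives more than the paper does: the paper does not prove \cref{prop:muckenhoupt_lsi} at all but simply quotes it as the symmetric specialization of the Bobkov--G\"otze criterion in the sharpened form of Barthe and Roberto \cite{Bobkov1999,Barthe2003}. Your reduction---using evenness of $H$ to take the median at $0$ and collapse $b_\pm,B_\pm$ to a single supremum, then substituting the explicit density---is exactly the intended derivation, and your calculus verification of the scalar inequality $\log(1+e^2 s)\leq 4\log(1+s/2)$ for $s\geq 2$ (with the observation that $s\geq 2$ is automatic on $[0,a]$ by symmetry) cleanly supplies the comparison $B_\beta\leq 4b_\beta$, which the paper likewise just attributes to \cite{Barthe2003}. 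Your closing caveat about matching conventions is well placed: that bookkeeping is the only place an error could creep in, and otherwise there is nothing to add.
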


\subsection{Proof of \cref{thm:saturation}}\label{sec:proof_saturation}

This section is devoted to the proof of \cref{thm:saturation}. Let us first derive estimates on the partition function $Z_\beta\define\int_{-\pi}^\pi e^{\beta\cos t}\,dt$ of the Gibbs measure \eqref{eq:extended_single_spin}: 
\begin{lemma}\label{lem:partition_function}
	We have that
	\begin{equation*}
		\pi e^{\frac{2\beta}{\pi}}\leq Z_\beta\leq 2\pi e^{\beta}
	\end{equation*}
	for all $\beta\geq 0$. Moreover, $\beta\mapsto Z_\beta$ is increasing.
\end{lemma}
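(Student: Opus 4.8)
The plan is to obtain both bounds on $Z_\beta=\int_{-\pi}^\pi e^{\beta\cos t}\,dt$ by elementary convexity and monotonicity arguments on the integrand, exploiting the evenness of $\cos$. First I would rewrite $Z_\beta=2\int_0^\pi e^{\beta\cos t}\,dt$. For the \emph{upper bound}, the crude estimate $\cos t\le 1$ for all $t$ immediately gives $Z_\beta\le 2\int_0^\pi e^\beta\,dt=2\pi e^\beta$. For the \emph{lower bound}, the key observation is that on $[0,\pi]$ the chord from $(0,1)$ to $(\pi,-1)$ lies below the graph of $\cos$; that is, $\cos t\ge 1-\frac{2t}{\pi}$ for $t\in[0,\pi]$, which follows from concavity of $\cos$ on $[0,\pi/2]$ combined with a direct sign check on $[\pi/2,\pi]$ (or simply from the fact that $\cos t - (1-\tfrac{2t}{\pi})$ vanishes at the endpoints and is concave, hence nonnegative, on $[0,\pi/2]$, while on $[\pi/2,\pi]$ one has $\cos t\ge -1 > 1-\tfrac{2t}{\pi}$ for $t<\pi$... wait, that last inequality needs care). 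Rather than the linear chord, the cleanest route for the constant $\pi e^{2\beta/\pi}$ is Jensen's inequality applied to the probability measure $\frac{dt}{\pi}$ on $[0,\pi]$: since $x\mapsto e^{\beta x}$ is convex,
\begin{equation*}
	\frac{Z_\beta}{2}=\int_0^\pi e^{\beta\cos t}\,\frac{dt}{\pi}\cdot\pi\ge \pi\,\exp\!\left(\beta\int_0^\pi \cos t\,\frac{dt}{\pi}\right)=\pi\exp\!\left(\frac{\beta}{\pi}\int_0^\pi\cos t\,dt\right)=\pi e^0=\pi,
\end{equation*}
which only yields $Z_\beta\ge 2\pi$, not the claimed bound. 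So instead I would integrate over a \emph{sub}-interval: restrict to $t\in[0,\pi/2]$ and use the concavity bound $\cos t\ge 1-\frac{2t}{\pi}$ there (valid since both sides agree at $t=0$ with matching... actually $\cos$ has derivative $0$ and the line has derivative $-2/\pi$ at $0$, and $\cos$ is concave, so $\cos t\ge$ its chord to the endpoint value $\cos(\pi/2)=0=1-\tfrac{2}{\pi}\cdot\tfrac{\pi}{2}$). Then $Z_\beta\ge 2\int_0^{\pi/2}e^{\beta(1-2t/\pi)}\,dt=\pi\beta^{-1}e^\beta(1-e^{-\beta})\cdot\frac{2}{2}$; this has the wrong form. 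The honest approach matching the stated constant $\pi e^{2\beta/\pi}$ is Jensen over $[0,\pi/2]$ with uniform measure $\frac{2\,dt}{\pi}$: $Z_\beta\ge 2\int_0^{\pi/2}e^{\beta\cos t}\,dt\ge 2\cdot\frac{\pi}{2}\exp(\beta\cdot\frac{2}{\pi}\int_0^{\pi/2}\cos t\,dt)=\pi\exp(\frac{2\beta}{\pi})$, using $\int_0^{\pi/2}\cos t\,dt=1$. This is exactly the claim.

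For \emph{monotonicity} of $\beta\mapsto Z_\beta$, I would differentiate under the integral sign: $Z_\beta'=\int_{-\pi}^\pi \cos t\, e^{\beta\cos t}\,dt$. This is not obviously nonnegative termwise, so I would instead argue via the correlation (FKG-type / Chebyshev) inequality: $Z_\beta^{-1}\int_{-\pi}^\pi\cos t\,e^{\beta\cos t}\,dt=\mathbb{E}_{\mu_\beta}[\cos t]$, and one checks $\mathbb{E}_{\mu_\beta}[\cos t]\ge 0$ because the density $e^{\beta\cos t}$ is itself an increasing function of $\cos t$, so $\cos t$ and the (normalized) density are positively correlated against the uniform measure; concretely, $\int_{-\pi}^\pi(\cos t-\bar c)(e^{\beta\cos t}-\bar e)\,dt\ge 0$ where bars denote averages against $\frac{dt}{2\pi}$, and since $\int\cos t\,dt=0$ the uniform average $\bar c=0$, giving $\int\cos t\,e^{\beta\cos t}\,dt\ge 0$ directly. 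Hence $Z_\beta'\ge 0$.

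The only mild obstacle is pinning down the elementary inequality $\cos t\ge 1-\frac{2t}{\pi}$ on $[0,\pi/2]$ (equivalently justifying the Jensen step on the sub-interval) and making sure the constant comes out as $e^{2\beta/\pi}$ rather than something weaker — the choice of integration domain $[0,\pi/2]$ with the renormalized uniform measure, for which $\int_0^{\pi/2}\cos t\,dt=1$ gives the exponent $\frac{2\beta}{\pi}\cdot 1=\frac{2\beta}{\pi}$, is the decisive trick. Everything else is routine.
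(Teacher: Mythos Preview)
Your final arguments are correct and, for the two bounds on $Z_\beta$, essentially identical to the paper's: the upper bound is the trivial estimate $\cos t\le 1$, and the lower bound is Jensen's inequality for the convex map $x\mapsto e^{\beta x}$ against the uniform probability measure on $[0,\pi/2]$, using $\int_0^{\pi/2}\cos t\,dt=1$. Your closing worry about needing the chord inequality $\cos t\ge 1-\tfrac{2t}{\pi}$ is misplaced: the Jensen step requires only convexity of the exponential, not any pointwise control on $\cos$, so you can drop that discussion entirely.

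For monotonicity your route differs slightly from the paper's. The paper computes $Z_\beta'=2\int_0^\pi\cos(t)e^{\beta\cos t}\,dt$ and then uses the substitution $t\mapsto\pi-t$ on $[\pi/2,\pi]$ to rewrite this as $2\int_0^{\pi/2}\cos(t)\big(e^{\beta\cos t}-e^{-\beta\cos t}\big)\,dt\ge 0$. You instead invoke the Chebyshev correlation inequality: with $X=\cos t$ under the uniform law on $[-\pi,\pi]$ one has $\mathbb{E}[X]=0$ and $\cov(X,e^{\beta X})\ge 0$ since $x\mapsto e^{\beta x}$ is increasing, whence $Z_\beta'=2\pi\,\mathbb{E}[Xe^{\beta X}]\ge 0$. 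Both arguments are one-line and equally elementary; yours has the mild advantage of not requiring any interval-splitting, while the paper's is perhaps more self-contained in that it does not appeal to a named inequality.
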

\begin{proof}
	The upper bound is immediate. For the lower bound we make use of Jensen's inequality to estimate
	\begin{equation*}
	 	\int_{-\pi}^\pi e^{\beta\cos t}\,dt\geq 2\int_{0}^{\pi/2}e^{\beta\cos t}\,dt\geq\pi\exp\left(\frac{2\beta}{\pi}\int_0^{\pi/2}\cos t\,dt\right)=\pi e^{\frac{2\beta}{\pi}}.
	 \end{equation*}
	 To see that $\beta\mapsto Z_\beta$ is increasing, we take the derivative
	 \begin{equation*}
	 	\frac{d}{d\beta}Z_\beta=2\int_0^\pi \cos (t) e^{\beta\cos t}\,dt=2\left(\int_{0}^{\pi/2}\cos (t) e^{\beta\cos t}\,dt-\int_{0}^{\pi/2}\cos (t) e^{-\beta\cos t}\,dt\right)\geq 0
	 \end{equation*}
	 since $e^{\beta\cos t}\geq e^{-\beta\cos t}$ for any $t\in [0,\pi/2]$.
\end{proof}

Next, we give a technical lemma from which we then, in combination with \cref{prop:muckenhoupt_poincare,prop:muckenhoupt_lsi}, deduce \cref{thm:saturation} effortlessly.
\begin{lemma}\label{lem:technical_results}
Let $\beta>0$. Then for $x\in [0,\pi]$ the following bounds hold:
\begin{enumerate}
	\item\label{it:technical_1} We have the lower bounds
	\begin{align}
	\int_{x}^{\pi}e^{\beta\cos t}\,dt &\geq \frac{1}{\sqrt{2\beta}e^{\beta}}\int_{0}^{\beta(1+\cos x)}\frac{e^t}{\sqrt{t}}\,dt,\label{eq:lower_bound_1}\\
	\int_{0}^{x}e^{-\beta\cos t}\,dt&\geq \frac{e^\beta}{\sqrt{2\beta}}\int_{\beta(1+\cos x)}^{2\beta}\frac{e^{-t}}{\sqrt{t}}\,dt.\label{eq:lower_bound_2}
	\end{align}
	\item\label{it:technical_2} Furthermore, the following explicit bounds hold:
\begin{align}
	\sqrt{2}e^{-\beta}\sqrt{1+\cos x}\leq\int_{x}^{\pi}e^{\beta\cos t}\,dt&\leq \frac{4e^{\beta\cos x}}{\sqrt{\beta}},\label{eq:bound_1}\\
	\int_{0}^{x}e^{-\beta\cos t}\,dt&\leq \frac{4}{e^{\beta\cos x}\sqrt{\beta}}.\label{eq:bound_2}
\end{align}
If $\beta>1$, the upper bound of \eqref{eq:bound_1} can be tightened as follows:
\begin{equation}\label{eq:bound_3}
	\int_x^\pi e^{\beta\cos t}\,dt\leq\frac{4e^{\beta\cos x}}{\sqrt{\beta}}\left(1\wedge\sqrt{\beta(1+\cos x)}\right).
\end{equation}
\end{enumerate}
\end{lemma}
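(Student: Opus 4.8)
The plan is to prove all the stated bounds by reducing the integrals $\int_x^\pi e^{\beta\cos t}\,dt$ and $\int_0^x e^{-\beta\cos t}\,dt$ to incomplete-Gamma-type integrals via the substitution $s=\beta(1-\cos t)$ (equivalently $u=\beta(1+\cos t)$), and then estimating those by elementary means. The key observation is that near the dominant endpoint ($t=0$ for the first integral, $t=\pi$ for the second), $\cos$ has a quadratic critical point, so $ds = \beta\sin t\,dt$ and $\sin t = \sqrt{(1-\cos t)(1+\cos t)}$ introduces exactly the $1/\sqrt{s}$ (or $1/\sqrt{t}$) singularity appearing in \eqref{eq:lower_bound_1}--\eqref{eq:lower_bound_2}. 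Crucially, $1+\cos t\leq 2$ on $[0,\pi]$, so $\sin t = \sqrt{(1-\cos t)(1+\cos t)}\leq\sqrt{2(1-\cos t)}$, and rearranging $dt = ds/(\beta\sin t)\geq ds/(\beta\sqrt{2(1-\cos t)}) = ds/(\sqrt{2\beta s})$; writing $e^{\beta\cos t}=e^{\beta}e^{-s}$ then yields \eqref{eq:lower_bound_1} after relabeling $t=\beta(1-\cos x)$... wait, one must be careful: the variable in \eqref{eq:lower_bound_1} is $\beta(1+\cos x)$, so I would instead use $u=\beta(1+\cos t)$, which runs from $\beta(1+\cos x)$ down to $0$ as $t$ goes from $x$ to $\pi$, and $e^{\beta\cos t}=e^{-\beta}e^{u}$, giving precisely \eqref{eq:lower_bound_1}. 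The bound \eqref{eq:lower_bound_2} is entirely analogous with $t\mapsto\pi-t$ (so $\cos t\mapsto -\cos t$) and the same $u=\beta(1+\cos t)$ substitution, now running over $[\beta(1+\cos x),2\beta]$.

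\textbf{Deriving part \ref{it:technical_2} from part \ref{it:technical_1}.} For the explicit bounds I would estimate the incomplete-Gamma integrals $\int_0^{c}\frac{e^u}{\sqrt u}\,du$ and $\int_c^{2\beta}\frac{e^{-u}}{\sqrt u}\,du$ directly. For the \emph{lower} bound in \eqref{eq:bound_1}: $\int_0^{\beta(1+\cos x)}\frac{e^u}{\sqrt u}\,du\geq\int_0^{\beta(1+\cos x)}\frac{du}{\sqrt u}=2\sqrt{\beta(1+\cos x)}$, and plugging into \eqref{eq:lower_bound_1} gives $\frac{1}{\sqrt{2\beta}e^\beta}\cdot 2\sqrt{\beta(1+\cos x)} = \sqrt{2}e^{-\beta}\sqrt{1+\cos x}$, exactly as claimed. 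For the \emph{upper} bounds in \eqref{eq:bound_1} and \eqref{eq:bound_2} I would not use part \ref{it:technical_1} but argue directly: on $[x,\pi]$, $\cos t\leq\cos x$, but that alone loses the $1/\sqrt\beta$ gain, so instead I would split the integral near $t=x$, using the concavity/monotonicity of $\cos$ to get a Laplace-type estimate — e.g. bound $\cos t\leq\cos x-(t-x)\sin x$ on a neighborhood where this linearization is valid and $\cos t\leq\cos x-c(t-x)^2$ globally via $1-\cos s\geq \frac{2}{\pi^2}s^2$-type inequalities — and then $\int_x^\pi e^{\beta\cos t}\,dt\leq e^{\beta\cos x}\int_0^\infty e^{-c\beta r^2}\,dr = e^{\beta\cos x}\cdot\frac{1}{2}\sqrt{\pi/(c\beta)}$, choosing $c$ so the constant is $\leq 4$. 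The bound \eqref{eq:bound_2} is the same after $t\mapsto\pi-t$. For \eqref{eq:bound_3} with $\beta>1$, the extra factor $1\wedge\sqrt{\beta(1+\cos x)}$ comes from noting that the \emph{length} of the interval $[x,\pi]$ is at most $\pi$ but more precisely, when $\beta(1+\cos x)$ is small (i.e. $x$ near $\pi$), the integrand is at most $e^{\beta\cos x}$ times the interval length $\pi-x$, and $\pi-x\asymp\sqrt{1+\cos x}$; combining this crude bound (valid when the resulting quantity is $\leq 4e^{\beta\cos x}/\sqrt\beta$) with the previous Gaussian bound gives the minimum.

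\textbf{Main obstacle.} The genuinely delicate point is getting the numerical constant $4$ (rather than some unspecified absolute constant) in the upper bounds \eqref{eq:bound_1}, \eqref{eq:bound_2}, \eqref{eq:bound_3}, since these feed directly into \cref{thm:saturation}; this requires a careful choice of the quadratic lower bound for $1-\cos t$ valid on the relevant range and a clean Gaussian/Gamma tail estimate rather than a lazy one. A secondary subtlety is handling the regime where $\beta(1+\cos x)$ is not small — there the "Laplace method" picture degrades and one must fall back on the trivial bound $\int_x^\pi e^{\beta\cos t}\,dt\leq (\pi-x)e^{\beta\cos x}$ or on bounding $\cos t\leq\cos x$ over the whole interval; reconciling the two regimes so the single clean bound $4e^{\beta\cos x}/\sqrt\beta$ holds uniformly (and likewise for \eqref{eq:bound_3}) is where most of the care goes. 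The lower bounds in part \ref{it:technical_1} and the first inequality of \eqref{eq:bound_1}, by contrast, are essentially immediate from the substitution described above.
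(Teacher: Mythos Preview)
Your treatment of part \ref{it:technical_1} and of the lower bound in \eqref{eq:bound_1} is essentially identical to the paper's: the substitution $u=\beta(1+\cos t)$ together with $\sin t=\sqrt{(1-\cos t)(1+\cos t)}\leq\sqrt{2}\sqrt{u/\beta}$ gives \eqref{eq:lower_bound_1} immediately, and $\int_0^c e^u/\sqrt{u}\,du\geq 2\sqrt{c}$ then yields the lower bound in \eqref{eq:bound_1}.

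Where you diverge from the paper is in the upper bounds \eqref{eq:bound_1}--\eqref{eq:bound_3}. You propose a Laplace--Gaussian argument based on a quadratic lower bound $\cos x-\cos t\geq c(t-x)^2$, and correctly flag the constant $4$ as the main obstacle. The paper avoids this entirely: it uses the \emph{same} substitution $y=\cos t$ (equivalently $u=\beta(1\pm\cos t)$) for the upper bounds as well, but now with the reverse inequality $\sqrt{1-y^2}\geq\sqrt{1-|y|}$. Since $1-|y|$ equals $1+y$ on $[-1,0]$ and $1-y$ on $[0,1]$, one must split at $t=\pi/2$; on each piece the integral becomes an incomplete-Gamma integral of exactly the same type as in part \ref{it:technical_1}, and the elementary estimates
\[
\int_0^r\frac{e^z}{\sqrt z}\,dz\leq 2e^r\big(1\wedge\sqrt{r}\big),\qquad \int_r^\infty\frac{e^{-z}}{\sqrt z}\,dz\leq 2e^{-r}
\]
give a contribution of at most $2e^{\beta\cos x}/\sqrt\beta$ from each half, so the constant $4$ is simply $2+2$. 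The refined bound \eqref{eq:bound_3} comes for free from the factor $(1\wedge\sqrt r)$ on the $[\pi/2,\pi]$ piece, noting that for $\beta>1$ and $x<\pi/2$ this factor equals $1$ anyway. In short, the paper's route sidesteps your ``main obstacle'' entirely by recycling the substitution from part \ref{it:technical_1}, whereas your Gaussian approach would require establishing a uniform quadratic minorant for $\cos x-\cos t$ on $[x,\pi]$ with an explicit constant --- doable, but noticeably more work.
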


We provide a proof of \cref{lem:technical_results} at the end of this section.

\begin{proof}[Proof of \cref{thm:saturation}] The theorem is proven by deriving suitable bounds on the quantities $A_\beta$ and $b_\beta$ from \cref{prop:muckenhoupt_poincare,prop:muckenhoupt_lsi} respectively.
	
We shall begin with the lower bound on $b_\beta$ and thus on the Log-Sobolev constant $\varpi_\beta$. Since we are interested in a lower bound on the supremum, we can certainly assume that $x\in [\pi/2,\pi)$. We treat the cases $\beta\leq 1$ and $\beta>1$ separately.

\dashuline{$\beta>1$:}
Invoking \cref{lem:partition_function} and \eqref{eq:bound_1}, we obtain
\begin{align*}
	\log\left(1+\frac{Z_\beta}{2\int_x^\pi e^{\beta\cos t}\,dt}\right)&\geq\log\left(1+\frac{\pi\sqrt{\beta} e^{\frac{2\beta}{\pi}}}{8 e^{\beta\cos x}}\right)\geq \log\left(1+\frac{\pi e^{\frac{2\beta}{\pi}}}{8}\right)\\
	&\geq \log\left(1+\frac{\pi e^{\frac{\pi\beta}{8}}}{8}\right)\geq \frac{\beta}{2+\frac{\pi}{4}}
\end{align*}
uniformly in $x\in [\pi/2,\pi)$. Here, we used that $\beta>1$, $\cos x\leq 0$, and the elementary inequality $\log\big(1+a e^{\frac{x}{a}}\big)\geq \frac{x}{2(1+a)}$ for $a,x>0$. To see the latter, note that the inequality holds at $x=0$ and check that $\frac{1}{2(1+a)}\leq\frac{e^{\frac{x}{a}}}{1+ae^{\frac{x}{a}}}=\frac{d}{dx}\log\big(1+a e^{\frac{x}{a}}\big)$ for all $a,x>0$.

To see that $b_\beta\geq c_1>0$ uniformly in $\beta>1$, it is enough to show that
\begin{equation*}
	\inf_{\beta>1}\frac{1}{\beta}\inf_{x\in[\pi/2,\pi)}\left(\int_x^\pi e^{\beta\cos t}\,dt\right)\left(\int_0^x e^{-\beta\cos t}\,dt\right)>0.
\end{equation*}
In view of \eqref{eq:lower_bound_1} and \eqref{eq:lower_bound_2}, this follows upon proving that $\inf_{\beta>1}\eta_\beta>0$ where
\begin{equation*}
	\eta_\beta\define\sup_{\pi/2\leq x\leq\pi}\left(\int_0^{\beta(1+\cos x)}\frac{e^t}{\sqrt{t}}\,dt\right)\left(\int_{\beta(1+\cos x)}^{2\beta}\frac{e^{-t}}{\sqrt{t}}\,dt\right).
\end{equation*}
But this in turn easily follows from the observation
\begin{align*}
	\eta_\beta&=\sup_{0\leq x\leq1}\left(\int_0^{\beta x}\frac{e^t}{\sqrt{t}}\,dt\right)\left(\int_{\beta x}^{2\beta}\frac{e^{-t}}{\sqrt{t}}\,dt\right)\\
	&=4\sup_{0\leq x\leq 1}\left(\int_0^{\sqrt{\beta x}}e^{t^2}\,dt\right)\left(\int_{\sqrt{\beta x}}^{\sqrt{2\beta}}e^{-t^2}\,dt\right)\nonumber\\
	&\geq 4\left(\int_{0}^{1/\sqrt{2}}e^{t^2}\,dt\right)\left(\int_{1/\sqrt{2}}^{\sqrt{2}}e^{-t^2}\,dt\right)\simeq .814157.
\end{align*}
In summary, we found $b_\beta\geq\frac{\eta_\beta}{4+\frac{\pi}{2}}\define d_1>0$ uniformly in $\beta> 1$.

\dashuline{$\beta\leq 1$:} Here, we just take $x=\pi/2$ to find
\begin{equation*}
\frac{\pi}{2e}\leq\int_{\pi/2}^{\pi}e^{\beta\cos t}\,dt=\int_{0}^{\pi/2}e^{-\beta\cos t}\,dt\leq \frac{\pi}{2}.
\end{equation*}
Consequently, we obtain
\begin{equation*}
	b_\beta\geq \left(\frac{\pi}{2e}\right)^2\log\left(1+\frac{Z_\beta}{2\int_{\pi/2}^\pi e^{\beta\cos t}\,dt}\right)\geq \left(\frac{\pi}{2e}\right)^2\log(3)\define d_2>0
\end{equation*}
where we used that $Z_\beta\geq Z_0=2\pi$ for all $\beta\geq 0$, see \cref{lem:partition_function}.

Setting $c_1\define d_1\wedge d_2$, we thus deduced $\varpi_\beta\geq c_1$ uniformly in $\beta\geq 0$.

Let us now turn to the upper bound on $\varpi_\beta$. Again, we distinguish the cases $\beta> 1$ and $\beta\leq1$.

\dashuline{$\beta\leq 1$:} The function $[0,\infty)\times [0,\infty)\ni (x,\zeta)\mapsto x\log(1+\zeta/x)$ is strictly increasing in both arguments. It follows
\begin{equation*}
b_\beta\leq \left(\int_{0}^{\pi}e^{\cos t}\,dt\right)^2\log\left(1+\frac{Z_1}{2\int_{0}^{\pi}e^{\cos t}\,dt}\right)\define\overline{d}_1.
\end{equation*}

\dashuline{$\beta>1$:} Using \cref{lem:partition_function} and the lower bound from \eqref{eq:bound_1}, we obtain
\begin{equation*}
\log\left(1+\frac{Z_\beta}{2\int_x^\pi e^{\beta\cos t}\,dt}\right)\leq\log\left(1+\frac{\pi e^{2\beta}}{\sqrt{2(1+\cos x)}}\right)\leq 2\beta +\log\left(1+\frac{\pi}{\sqrt{2(1+\cos x)}}\right)
\end{equation*}
for $x\in [0,\pi)$. Invoking the upper bounds \eqref{eq:bound_2} and \eqref{eq:bound_3} one has
\begin{align*}
b_\beta&\leq 32+\frac{16}{\beta}\sqrt{1+\cos x} \log\left(1+\frac{\pi}{\sqrt{2(1+\cos x)}}\right)\\
&\leq 32+16\sqrt{2}\log\left(1+\frac{\pi}{2}\right)\define \overline{d}_2.
\end{align*}
We choose $c_2\define 16(\overline{d}_1\vee\overline{d}_2)$ to establish the upper bound on the Logarithmic Sobolev constant. 

This finishes the proof of the first part of the theorem. Similar arguments establish the asserted upper bound on the Poincar\'e constant. We leave the details to the reader.
\end{proof}

We conclude the article with the elementary integral bounds employed in the preceding proof.
\begin{proof}[Proof of \cref{lem:technical_results}]
We first note that the substitution $y=\cos t$ gives
\begin{equation}\label{eq:tech_sim}
	\int_x^\pi e^{\beta\cos t}\,dt=\int_{-1}^{\cos x}\frac{e^{\beta y}}{\sqrt{1-y^2}}\,dy.
\end{equation}
The denominator can be bounded by $\sqrt{1-y^2}\leq\sqrt{2(1+y)}$, $y\in[-1,1]$. We use this and the substitution $z=\beta(1+y)$ to find
\begin{equation}\label{eq:x_to_pi_lower}
\int_x^\pi e^{\beta\cos t}\,dt\geq \frac{1}{\sqrt{2\beta}e^{\beta}}\int_{0}^{\beta(1+\cos x)}\frac{e^{z}}{\sqrt{z}}\,dz
\end{equation}
for all $x\in [0,\pi]$. This is the lower bound \eqref{eq:lower_bound_1}. A similar computation establishes \eqref{eq:lower_bound_2}.

Let us now turn to part \ref{it:technical_2} of the lemma. We first observe the elementary bound
\begin{equation}\label{eq:int_bounds_exp/sqrt}
	2\sqrt{r}\leq\int_0^r\frac{e^z}{\sqrt{z}}\,dz\leq 2e^r(1\wedge\sqrt{r})
\end{equation}
for all $r\geq 0$. Applying this to \eqref{eq:x_to_pi_lower} immediately yields the lower bound in \eqref{eq:bound_1}. For the upper bound, we have to distinguish two cases. Let us first assume $x\in [\pi/2, \pi]$. Then using $\sqrt{1-y^2}\geq \sqrt{1-|y|}$, $y\in[-1,1]$, for the denominator in \eqref{eq:tech_sim} followed by the substitution $z=\beta(1+y)$, we arrive at
\begin{equation}\label{eq:x_to_pi_1}
	\int_x^\pi e^{\beta\cos t}\,dt\leq\frac{1}{\sqrt{\beta}e^\beta}\int_0^{\beta(1+\cos x)}\frac{e^z}{\sqrt{z}}\,dz\leq\frac{2e^{\beta\cos x}}{\sqrt{\beta}}\left(1\wedge\sqrt{\beta(1+\cos x)}\right)
\end{equation}
where the last step used \eqref{eq:int_bounds_exp/sqrt}. This establishes the upper bounds in \eqref{eq:bound_1} and \eqref{eq:bound_3} in this case.

If $x\in[0,\pi/2)$, we first follow the same chain of manipulations as before---but now with the substitution $z=\beta(1-y)$---to find
\begin{equation*}
	\int_x^{\pi/2}e^{\beta\cos t}\,dt\leq \frac{e^\beta}{\sqrt{\beta}}\int_{\beta(1-\cos x)}^{\beta}\frac{e^{-z}}{\sqrt{z}}\,dz\leq\frac{2e^{\beta\cos x}}{\sqrt{\beta}}
\end{equation*}
where we used $\int_r^\infty e^{-z}/\sqrt{z}\,dz\leq 2e^{-r}$ for all $r\geq 0$. Combining this with \eqref{eq:x_to_pi_1}, we can deduce the upper bound in \eqref{eq:bound_1}:
\begin{equation*}
	\int_x^\pi e^{\beta\cos t}\,dt\leq\frac{2}{\sqrt{\beta}}+\frac{2e^{\beta\cos x}}{\sqrt{\beta}}\leq \frac{4e^{\beta\cos x}}{\sqrt{\beta}}.
\end{equation*}
If, in addition, $\beta> 1$, then certainly $(1\wedge\sqrt{\beta(1+\cos x)})=1$ and \eqref{eq:bound_3} follows.

For \eqref{eq:bound_2}, we compute analogously
\begin{align*}
	\int_0^xe^{-\beta\cos t}\,dt&\leq \frac{2}{\sqrt{\beta}e^{\beta\cos x}},\quad x\in[0,\pi/2],\\
	\int_{\pi/2}^xe^{-\beta\cos t}\,dt&\leq \frac{2}{\sqrt{\beta}e^{\beta\cos x}},\quad x\in(\pi/2,\pi],
\end{align*}
from which the asserted upper bound easily follows.
\end{proof}

\footnotesize
\bibliographystyle{abbrv}
\bibliography{monotonicity_v2.bib}

\begin{thebibliography}{10}

\bibitem{Ane2000}
C.~An\'{e}, S.~Blach\`ere, D.~Chafa\"{\i}, P.~Foug\`eres, I.~Gentil,
  F.~Malrieu, C.~Roberto, and G.~Scheffer.
\newblock {\em Sur les in\'{e}galit\'{e}s de {S}obolev logarithmiques},
  volume~10 of {\em Panoramas et Synth\`eses [Panoramas and Syntheses]}.
\newblock Soci\'{e}t\'{e} Math\'{e}matique de France, Paris, 2000.
\newblock With a preface by Dominique Bakry and Michel Ledoux.

\bibitem{Bakry1994}
D.~Bakry.
\newblock L'hypercontractivit\'{e} et son utilisation en th\'{e}orie des
  semigroupes.
\newblock In {\em Lectures on probability theory ({S}aint-{F}lour, 1992)},
  volume 1581 of {\em Lecture Notes in Math.}, pages 1--114. Springer, Berlin,
  1994.

\bibitem{Bakry1983}
D.~Bakry and M.~\'{E}mery.
\newblock Diffusions hypercontractives.
\newblock In {\em S\'{e}minaire de probabilit\'{e}s, {XIX}, 1983/84}, volume
  1123 of {\em Lecture Notes in Math.}, pages 177--206. Springer, Berlin, 1985.

\bibitem{Bakry2013}
D.~Bakry, I.~Gentil, and M.~Ledoux.
\newblock {\em Analysis and geometry of {M}arkov diffusion operators}, volume
  348 of {\em Grundlehren der Mathematischen Wissenschaften [Fundamental
  Principles of Mathematical Sciences]}.
\newblock Springer, Cham, 2014.

\bibitem{Barthe2003}
F.~Barthe and C.~Roberto.
\newblock Sobolev inequalities for probability measures on the real line.
\newblock volume 159, pages 481--497. 2003.
\newblock Dedicated to Professor Aleksander Pe\l czy\'{n}ski on the occasion of
  his 70th birthday (Polish).

\bibitem{Bauerschmidt2019}
R.~Bauerschmidt and T.~Bodineau.
\newblock A very simple proof of the {LSI} for high temperature spin systems.
\newblock {\em J. Funct. Anal.}, 276(8):2582--2588, 2019.

\bibitem{Mazya2008}
S.~Bobkov and F.~G\"{o}tze.
\newblock Hardy type inequalities via {R}iccati and {S}turm-{L}iouville
  equations.
\newblock In {\em Sobolev spaces in mathematics. {I}}, volume~8 of {\em Int.
  Math. Ser. (N. Y.)}, pages 69--86. Springer, New York, 2009.

\bibitem{Bobkov1999}
S.~G. Bobkov and F.~G\"{o}tze.
\newblock Exponential integrability and transportation cost related to
  logarithmic {S}obolev inequalities.
\newblock {\em J. Funct. Anal.}, 163(1):1--28, 1999.

\bibitem{Deuschel1989}
J.-D. Deuschel and D.~W. Stroock.
\newblock {\em Large deviations}, volume 137 of {\em Pure and Applied
  Mathematics}.
\newblock Academic Press, Inc., Boston, MA, 1989.

\bibitem{Dyson1978}
F.~J. Dyson, E.~H. Lieb, and B.~Simon.
\newblock Phase transitions in quantum spin systems with isotropic and
  nonisotropic interactions.
\newblock {\em J. Statist. Phys.}, 18(4):335--383, 1978.

\bibitem{Evans1998}
L.~C. Evans.
\newblock {\em Partial differential equations}, volume~19 of {\em Graduate
  Studies in Mathematics}.
\newblock American Mathematical Society, Providence, RI, 1998.

\bibitem{Gross1975}
L.~Gross.
\newblock Logarithmic {S}obolev inequalities.
\newblock {\em Amer. J. Math.}, 97(4):1061--1083, 1975.

\bibitem{Guionnet2003}
A.~Guionnet and B.~Zegarlinski.
\newblock Lectures on logarithmic {S}obolev inequalities.
\newblock In {\em S\'{e}minaire de {P}robabilit\'{e}s, {XXXVI}}, volume 1801 of
  {\em Lecture Notes in Math.}, pages 1--134. Springer, Berlin, 2003.

\bibitem{Kato1950}
T.~Kato.
\newblock On the adiabatic theorem of quantum mechanics.
\newblock {\em Journal of the Physical Society of Japan}, 5(6):435--439, 1950.

\bibitem{Kato1995}
T.~Kato.
\newblock {\em Perturbation theory for linear operators}.
\newblock Classics in Mathematics. Springer-Verlag, Berlin, 1995.
\newblock Reprint of the 1980 edition.

\bibitem{Ledoux2001}
M.~Ledoux.
\newblock {\em The concentration of measure phenomenon}, volume~89 of {\em
  Mathematical Surveys and Monographs}.
\newblock American Mathematical Society, Providence, RI, 2001.

\bibitem{Lions1984}
P.-L. Lions and A.-S. Sznitman.
\newblock Stochastic differential equations with reflecting boundary
  conditions.
\newblock {\em Comm. Pure Appl. Math.}, 37(4):511--537, 1984.

\bibitem{Martinelli1999}
F.~Martinelli.
\newblock Lectures on {G}lauber dynamics for discrete spin models.
\newblock In {\em Lectures on probability theory and statistics
  ({S}aint-{F}lour, 1997)}, volume 1717 of {\em Lecture Notes in Math.}, pages
  93--191. Springer, Berlin, 1999.

\bibitem{Miclo2008}
L.~Miclo.
\newblock Quand est-ce que des bornes de {H}ardy permettent de calculer une
  constante de {P}oincar\'{e} exacte sur la droite?
\newblock {\em Ann. Fac. Sci. Toulouse Math. (6)}, 17(1):121--192, 2008.

\bibitem{Muckenhoupt1972}
B.~Muckenhoupt.
\newblock Hardy's inequality with weights.
\newblock {\em Studia Math.}, 44:31--38, 1972.

\bibitem{Reed1978}
M.~Reed and B.~Simon.
\newblock {\em Methods of modern mathematical physics. {IV}. {A}nalysis of
  operators}.
\newblock Academic Press [Harcourt Brace Jovanovich, Publishers], New
  York-London, 1978.

\bibitem{Rothaus1980}
O.~S. Rothaus.
\newblock Logarithmic {S}obolev inequalities and the spectrum of
  {S}turm-{L}iouville operators.
\newblock {\em J. Functional Analysis}, 39(1):42--56, 1980.

\bibitem{Rothaus1981}
O.~S. Rothaus.
\newblock Logarithmic {S}obolev inequalities and the spectrum of
  {S}chr\"{o}dinger operators.
\newblock {\em J. Functional Analysis}, 42(1):110--120, 1981.

\bibitem{Roustant2017}
O.~Roustant, F.~Barthe, and B.~Iooss.
\newblock Poincar\'{e} inequalities on intervals---application to sensitivity
  analysis.
\newblock {\em Electron. J. Stat.}, 11(2):3081--3119, 2017.

\bibitem{Royer2007}
G.~Royer.
\newblock {\em An initiation to logarithmic {S}obolev inequalities}, volume~14
  of {\em SMF/AMS Texts and Monographs}.
\newblock American Mathematical Society, Providence, RI; Soci\'{e}t\'{e}
  Math\'{e}matique de France, Paris, 2007.
\newblock Translated from the 1999 French original by Donald Babbitt.

\bibitem{Saloff2002}
L.~Saloff-Coste.
\newblock {\em Aspects of {S}obolev-type inequalities}, volume 289 of {\em
  London Mathematical Society Lecture Note Series}.
\newblock Cambridge University Press, Cambridge, 2002.

\bibitem{Skorokhod1961}
A.~V. Skorokhod.
\newblock Stochastic equations for diffusion processes in a bounded region. i.
\newblock {\em Theory of Probability \& Its Applications}, 6(3):264--274, 1961.

\bibitem{Skorokhod1962}
A.~V. Skorokhod.
\newblock Stochastic equations for diffusion processes in a bounded region. ii.
\newblock {\em Theory of Probability \& Its Applications}, 7(1):3--23, 1962.

\bibitem{Stroock1971}
D.~W. Stroock and S.~R.~S. Varadhan.
\newblock Diffusion processes with boundary conditions.
\newblock {\em Comm. Pure Appl. Math.}, 24:147--225, 1971.

\end{thebibliography}
\end{document}